\numberwithin{equation}{section}
\def\N{\mathbb{N}}
\def\R{\mathbb{R}}
\newcommand{\rdbrack}{]\!]}
\newcommand{\ldbrack}{[\![}
\def\epsilon{\varepsilon}
\def\e{\varepsilon}
\newcommand\br{\begin{rem}}
\newcommand\er{\end{rem}}
\newcommand\bp{\begin{pmatrix}}
\newcommand\ep{\end{pmatrix}}
\newcommand\be{\begin{equation}}
\newcommand\ee{\end{equation}}
\newcommand\ba{\begin{equation}\begin{aligned}}
\newcommand\ea{\end{aligned}\end{equation}}
\newtheorem{theorem}{Theorem}[section]
\newtheorem{corollary}[theorem]{Corollary}
\newtheorem{remark}[theorem]{Remark}
\newtheorem{ans}[theorem]{Definition}
\title{On the metastable behavior of solutions to a class of parabolic systems} 
\begin{document}

\maketitle

\begin{center}
MARTA STRANI\footnote{Universit\`a di Milano Bicocca, Dipartimento di Matematica e Applicazioni, Milano (Italy). E-mail address: \texttt{marta.strani@unimib.it}, \texttt{martastrani@gmail.com}.}
\end{center}

\vskip1cm

\begin{abstract}
In this paper we describe the metastable behavior of solutions to a class of parabolic systems. In particular, we improve some results contained in \cite{MS}  by using different techniques to describe the slow motion of the internal layers. Numerical simulations illustrate the results.
\end{abstract}

\begin{quote}\footnotesize\baselineskip 14pt 
{\bf Key words:} 
Metastability, slow motion, internal layers, reaction-diffusion systems. 
 \vskip.15cm
\end{quote}

\begin{quote}\footnotesize\baselineskip 14pt 
{\bf AMS subject classifications:} 35B25, 35B36, 35B40,  35K45
 \vskip.15cm
\end{quote}

\pagestyle{myheadings}
\thispagestyle{plain}
\markboth{M.STRANI}{Metastability for parabolic systems}

\section{Introduction}

The slow motion of internal interfaces has been widely studied for a large class of evolutive PDEs.
Such phenomenon is known as {\it metastability}. The qualitative features of a metastable dynamics are the following: through a transient process, a pattern of internal layers is formed from initial data over a $\mathcal O(1)$ time interval. However, once this pattern is formed, the subsequent motion of the internal layers is exponentially slow, converging to their asymptotic limit. As a consequence, two different time scales emerge: for short times, the solution are close to some non-stationary state; subsequently, they drifts towards the equilibrium solution with a speed rate that is exponentially small.
\vskip0.2cm
Many fundamental partial differential equations, concerning different areas, exhibit such behavior. Slow motion of internal layers appears in the study of viscous scalar conservation laws, studied, for example, in \cite{LafoOMal94, LafoOMal95, MS, ReynWard95}. These equations emerge in different fields of the fluid mechanics, and they are used as simplified models to study, for example, the gas dynamics and the traffic flow. The metastable dynamics of interfaces arises also in the study of the so called generalized Burgers equation, describing the dynamics of an upwardly propagating flame-front in a vertical channel  (see \cite{BerKamSiv01,  Str13, SunWard99}). Another area where the phenomenon of slow motion on internal interfaces appears is the one of phase transition problems, described by the Allen-Cahn and Cahn-Hilliard equation, with the fundamental contributions \cite{AlikBatFus91, CarrPego89, FuscHale89,Pego89}. Two more recent contributions are the references \cite{OttRez06,Str14}. Depending on the assumptions, these equations describe different models in mathematics and physics; they find applications in neurophysics, biophysics, population genetics, and, especially, in the process of phase separation of fluids and/or metals.
Finally, we quote here the area of relaxation schemes \cite{JinXin95,Str12}, extensively used to approximate systems by a nearby problem  that resemble the original system with a small dissipative correction, and that is easier to solve.

\vskip0.2cm
The study of slow motion for parabolic equations dates back the work of Kreiss and Kreiss \cite{KreiKrei86}; here the authors concern with a viscous scalar conservation law in a bounded domain of the real line, proving that the eigenvalues of the linearized operator around the (unique) steady state of the equation have the following distribution
\begin{equation*}
\lambda_1^\varepsilon= \mathcal O(e^{-1/\varepsilon}), \qquad \lambda_k^\varepsilon \leq -\frac{c}{\varepsilon}, \quad \forall \, k \geq 2.
\end{equation*}
This precise decomposition of the spectrum translates into a slow motion for the solutions of the equation, since, for large times,  their dynamics is described by terms of order $e^{\lambda_1^\varepsilon t}$.

\vskip0.2cm
Starting from \cite{KreiKrei86}, there are a number of papers that deal with the slow dynamics of solution to different partial differential equations. The techniques used are very different, depending on the equations under consideration, but usually the common aim is to find an equation for the position of the layers describing their slow dynamics. 

One recent contribution is the reference \cite{MS}, where the authors develop a general technique to describe the slow motion of internal layers for a general class of parabolic systems. In particular, by linearizing the original system around an ``approximate manifold" of stationary solutions, they end up with a couple system for the position of the layer, named $\xi$, and the perturbation $v$, defined as the difference between the solution and an element of the manifold.  They subsequent describe the slow dynamics of the solutions for the approximating system (see, \cite[Theorem 2.1]{MS}) for the couple $(\xi, v)$, where the higher order terms  are canceled out; indeed, handling with the complete system brings into the analysis also the specific form of the nonlinear terms in $v$. For example, in the case of viscous conservation laws, this means the appearance of a first order space derivative, and a rigorous result needs an additional bound (namely, a bound for the $H^1$ norm of the solution instead that for the $L^2$ norm). The key point of their approach is a spectral analysis of the linearized operator $\mathcal L^\varepsilon_\xi$, arising from the linearization of the original system.
\vskip0.2cm
In this paper we obtain some new results describing the metastable behavior of a class of parabolic systems. 

Firstly, we obtain a new result concerning the analysis of the so called ``quasi-linearized" system for the couple $(\xi,v)$, already studied in \cite[Theorem 2.1]{MS}. In particular, by using a different approach based on the  {\it theory of stable families of generators}, we prove a slightly different estimate for the perturbation $v$ that still states that $v$ has a very fast decay in time, up to a reminder that is small in $\varepsilon$, as already pointed out in \cite{MS}.
Thanks to this new strategy, we are able to remove one of the hypotheses stated in \cite[Theorem 2.1]{MS} and concerning the convergence of the series of the eigenfunctions of the linearized operator $\mathcal L^\varepsilon_\xi$. 
 
In the last part of the paper, we state and prove a new result describing the behavior of the solutions to the complete system for $(\xi, v)$, where also the nonlinear terms in $v$ are taken into account. In particular, in the case of systems of conservation laws, we obtain a bound for the $H^1-$norm of the perturbation $v$ that will be used to decoupled the system, in order to obtain a precise estimate for the size of the parameter $\xi$, describing the slow motion of solutions. Obviously, this makes the theory much more complete with respect to \cite{MS}, since we are able to provide an estimate for the solution to the complete system, that suites the behavior of the solutions to the original system.

 Precisely, we will show that the perturbation $v$ can be decomposed as $v=z+R$, where
 \begin{equation*}
 |z|_{{}_{H^1}} \, \leq |v_0|_{{}_{H^1}} e^{- c t}, \quad {\rm and} \quad 
 |R|_{{}_{H^1}}\,\leq C\,
		\left\{ \varepsilon^\delta \, |v_0|^2_{{}_{H^1}}+\varepsilon^{1-\delta}\right\}.
 \end{equation*}
 for $\delta \in (0,1)$. Hence, as in the ``quasi linear" case, $v$ has a very fast decay in time up to a reminder that is small in $\varepsilon$, since it behaves like $\varepsilon^\delta$.

\section{General Framework}

Given a bounded interval $I=[a,b] \subset \R$, we consider the Cauchy  problem
\begin{equation}\label{GS}
\partial_t u = \mathcal F^\varepsilon[u] , \qquad u(x,0)=u_0(x),
\end{equation} 
 where the unknown $u: [0, +\infty) \to [L^2(I)]^n$, and $\mathcal F^\varepsilon[u]$ is an ordinary one dimensional differential operator that depends singularly on the parameter $\varepsilon$, meaning that $\mathcal F^0$ is of lower order.
 System \eqref{GS} is to be considered for $x \in [a,b]$, and it is complemented with appropriate boundary conditions.
 
 To fix the ideas, we can think about systems of conservation laws, or reaction-diffusion type systems. In these cases, the solutions to \eqref{GS} exhibit a metastable behavior; precisely, starting from an initial datum located far from the stable steady steady to \eqref{GS}, i.e. a solution to $\mathcal F^\varepsilon[u]=0$, we expect the solution to develop into a layered function in a time interval of order $1$, before converging to its asymptotic limit in a exponentially long time interval (see Fig. 1 and Fig. 2).

 \begin{figure}
\centering
\includegraphics[width=14cm,height=10cm]{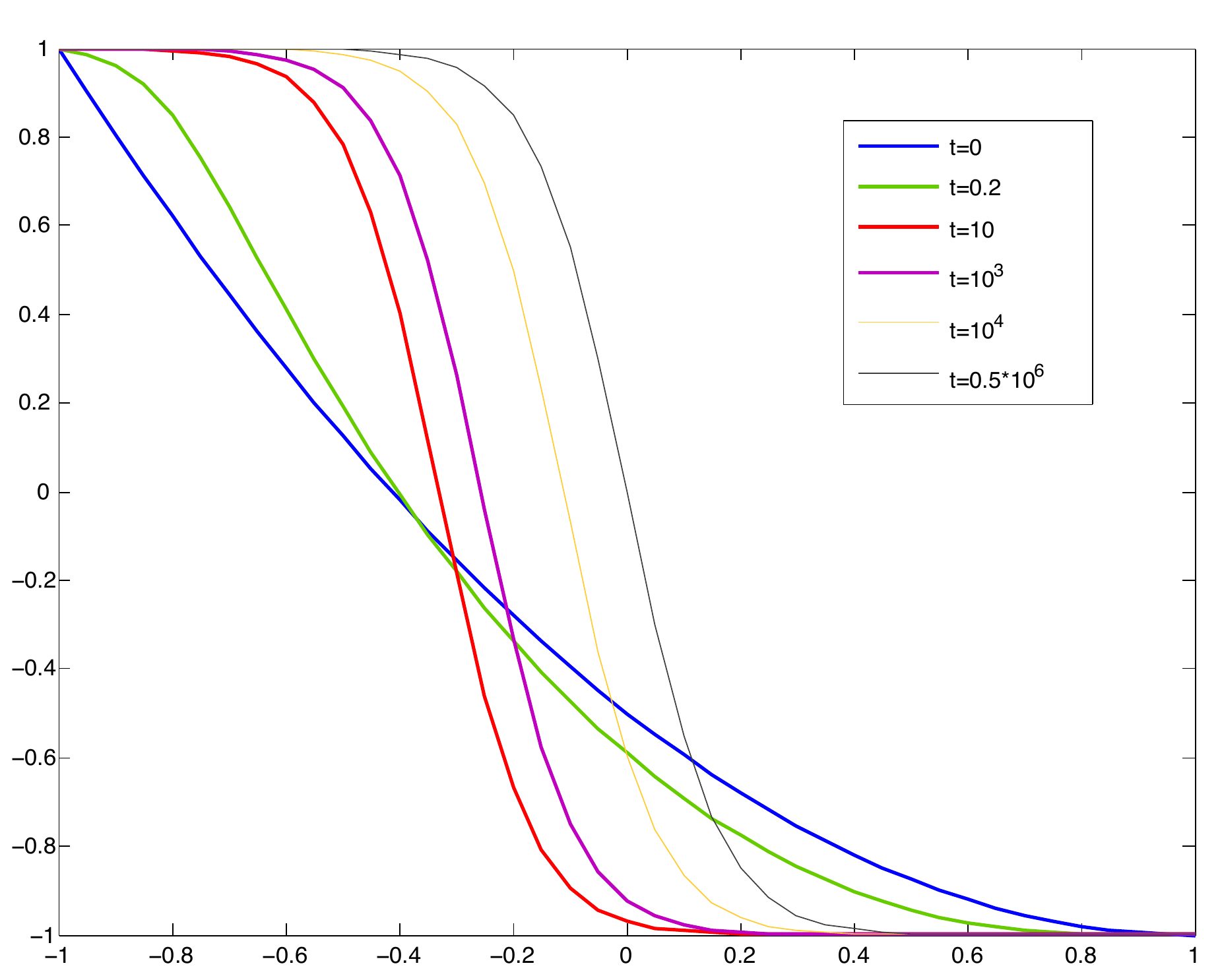}
\caption{\small{The metastable behavior of the solution for the viscous Burgers equation $\partial_t u =\varepsilon \partial_x^2 u - u \partial_x u$ in the interval $I=[-1,1]$ and complemented with Dirichlet boundary conditions $u(\pm 1)=\mp 1$. Starting from a decreasing initial datum, a shock layer is formed in a short time scale; one such a layer appears, it starts to converge towards its asymptotic configuration, corresponding  to an hyperbolic tangent centered in zero, and this motion is extremely slow.} }
\end{figure}

 \begin{figure}
\centering
\includegraphics[width=7cm,height=5cm]{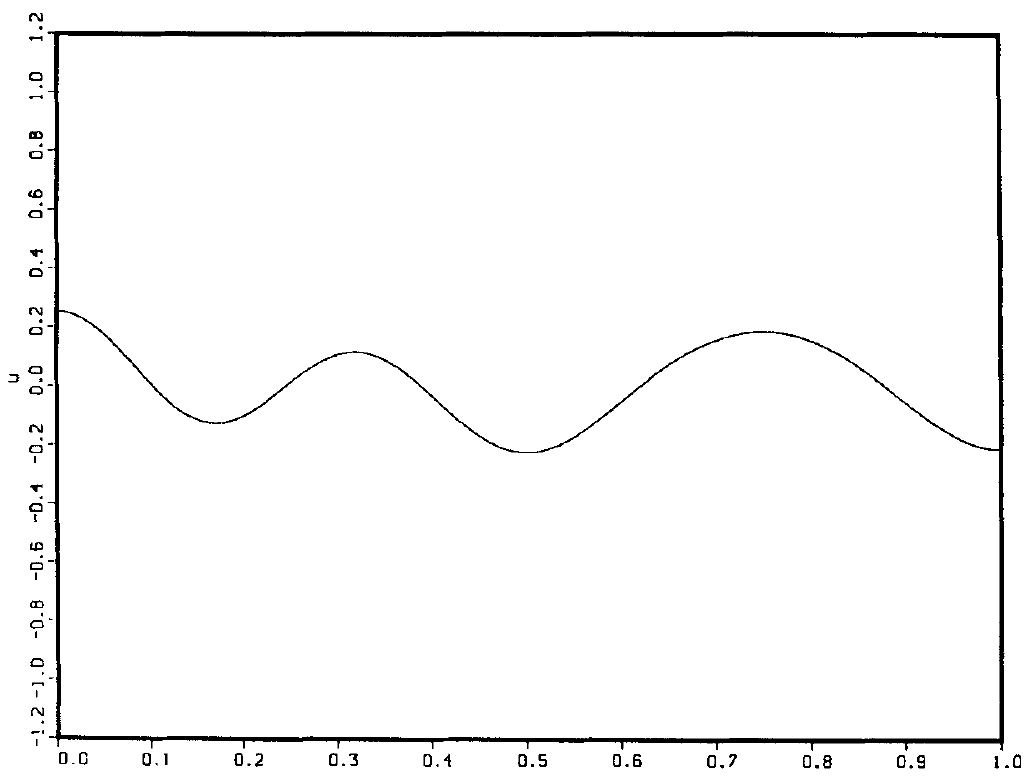}
 \hspace{3mm}
\includegraphics[width=7cm,height=5cm]{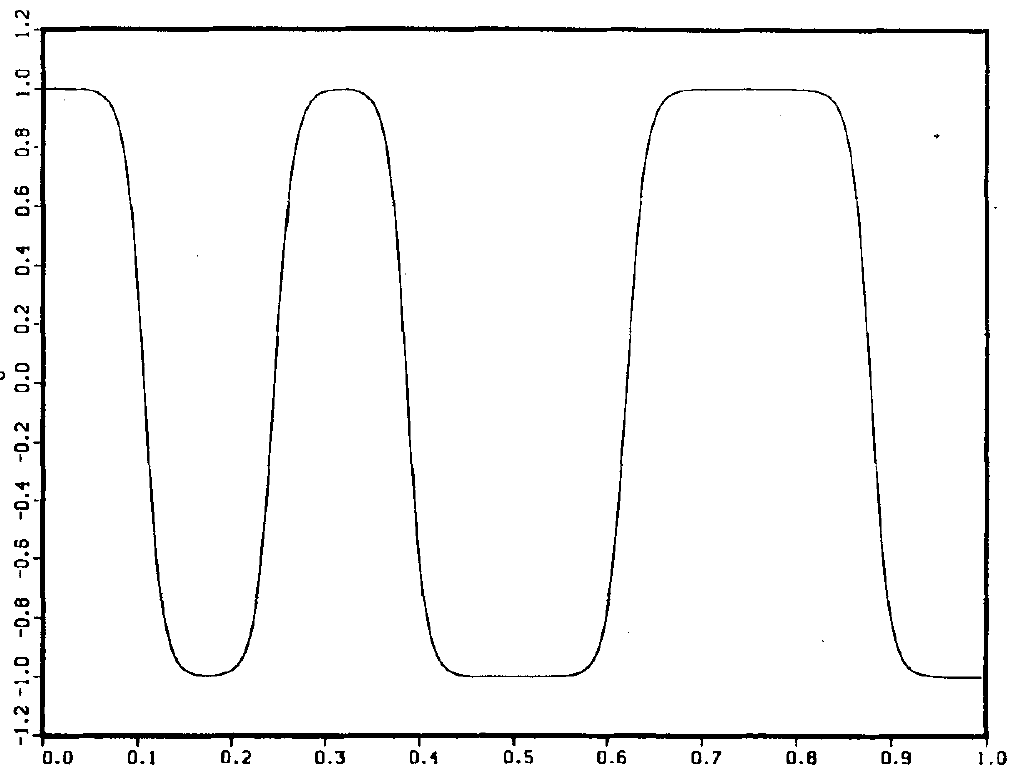}
\caption{\small{The solution to the Allen-Cahn equation $\partial_t{u}=\varepsilon \partial^2_x  { u}-\frac{1}{2}({ u}^3- {u})$ with Neuman boundary conditions $\partial_xu(0)=\partial_xu(1)=0$.
 Starting from an initial datum with ${N}$ zeroes (see picture on the left), a pattern of ${N}$ interfacial layers is formed far out from the equilibrium configuration (given by a patternless constant solution). This pattern persists for an {exponentially long time}, proportional to $e^{1/\varepsilon}.$} This figure was produced by J. Carr and R.L. Pego in \cite{ CarrPego89}. }
\end{figure}

\vskip0.2cm
 In order to describe the dynamics of solutions to \eqref{GS} after the formation of the shock, we follow the strategy firstly developed in \cite{MS}. We summarize the essential steps in order to introduce the objects that will be used in the following.

Given a one-dimensional interval $J$, let $\{U^{\varepsilon}(\cdot;\xi)\,:\,\xi\in J\}$ be a
one-parameter family  of approximate stationary
solutions to \eqref{GS} in the sense that  the nonlinear term ${\mathcal F}^\varepsilon[U^{\varepsilon}(\cdot;\xi)]$
 tends to $0$ as $\varepsilon\to 0$ (we will specify this assumption in details later on).

Next, we decompose the solution to the initial value problem \eqref{GS} as 
\begin{equation*}
	u(\cdot,t)=U^{\varepsilon}(\cdot;\xi(t))+v(\cdot,t),
\end{equation*} 
with $\xi=\xi(t)\in J$  and the perturbation $v=v(\cdot,t)\in [L^2(I)]^n$.
Substituting, one obtains
\begin{equation}\label{eqv}
	\partial_t v=\mathcal{L}^\varepsilon_\xi v+{\mathcal F}^\varepsilon[U^{\varepsilon}(\cdot;\xi)]
		-\partial_{\xi}U^{\varepsilon}(\cdot;\xi)\,\frac{d\xi}{dt}+\mathcal{Q}^\varepsilon[v,\xi]
\end{equation}
where
\begin{equation*}
	\begin{aligned}
		\mathcal{L}^\varepsilon_\xi v&:=d{\mathcal F}^\varepsilon[U^{\varepsilon}(\cdot;\xi)]\,v\\
		\mathcal{Q}^\varepsilon[v,\xi]&:={\mathcal F}^\varepsilon[U^{\varepsilon}(\cdot;\xi)+v]
			-{\mathcal F}^\varepsilon[U^{\varepsilon}(\cdot;\xi)]
				-d{\mathcal F}^\varepsilon[U^{\varepsilon}(\cdot;\xi)]\,v.
	\end{aligned} 
\end{equation*}
Denoting by $\varphi^\varepsilon_k=\varphi^\varepsilon_k(\cdot;\xi)$ and $\psi^\varepsilon_k=\psi^\varepsilon_k(\cdot;\xi)$ the eigenfunctions of $\mathcal L^\varepsilon_\xi$ and of its adjoint respectively, and setting 
\begin{equation*}
	v_k=v_k(\xi;t):=\langle \psi^\varepsilon_k(\cdot;\xi),v(\cdot,t)\rangle, 
\end{equation*}
we impose that 
the component $v_1$  is identically zero, that is
\begin{equation}\label{v1equal0}
	\frac{d}{dt} \langle \psi^\varepsilon_1(\cdot;\xi(t)), v(\cdot,t) \rangle =0
	\qquad\textrm{and}\qquad
	\langle \psi^\varepsilon_1(\cdot;\xi_0), v_0(\cdot))\rangle=0.
\end{equation}
Such a constraint is a consequence of the fact that, as we will see in the following, the first eigenvalue of the linearized operator $\mathcal L^\varepsilon_\xi$ is small in the vanishing viscosity limit, i.e. $\varepsilon \to 0$.
Hence, we set an algebraic condition ensuring orthogonality between ${\psi}^\varepsilon_1$ and $v$, in order to remove the singular part of the operator $\mathcal L^\varepsilon_\xi$.
Precisely, by imposing the first component of the perturbation $v_1$ to be zero, we solve the equation in a subspace in which the operator doesn't vanish.
From \eqref{v1equal0} and using equation \eqref{eqv}, in the regime of small $v$,
we obtain a nonlinear scalar differential equation for the variable $\xi$, describing the reduced
dynamics along the approximate manifold, that is
\begin{equation}\label{eqxiNL}
	\frac{d\xi}{dt}=\theta^\varepsilon(\xi)\bigl(1+\langle\partial_{\xi} \psi^\varepsilon_1, v \rangle\bigr)
		+ \rho^\varepsilon[\xi,v], 
\end{equation}
where
\begin{equation*}
	\begin{aligned}
 	\theta^\varepsilon(\xi)
		&:=\langle \psi^\varepsilon_1,{\mathcal F[U^{\varepsilon}] \rangle}\\
	\rho^\varepsilon[\xi,v]&:=
		\langle \psi^\varepsilon_1,\mathcal{Q}^\varepsilon\rangle
		+\langle \partial_\xi\psi^\varepsilon_1, v\rangle^2.
	\end{aligned}
\end{equation*}
Furthermore, using \eqref{eqxiNL}, equation  \eqref{eqv} can be rewritten as
\begin{equation}\label{eqvNL}
	\partial_t v= H^\varepsilon(x;\xi)
		+ ({\mathcal L}^\varepsilon_\xi+{\mathcal M}^\varepsilon_\xi)v
			+\mathcal{R}^\varepsilon[v,\xi],
\end{equation}
where 
\begin{align*}
		H^\varepsilon(\cdot;\xi)&:={\mathcal F}^\varepsilon[U^{\varepsilon}(\cdot;\xi)]
			-\partial_{\xi}U^{\varepsilon}(\cdot;\xi)\,\theta^\varepsilon(\xi),\\
		{\mathcal M}^\varepsilon_\xi v&:=-\partial_{\xi}U^{\varepsilon}(\cdot;\xi)
			\,\theta^\varepsilon(\xi)\,\langle\partial_{\xi} \psi^\varepsilon_1, v \rangle,\\ 
		\mathcal{R}^\varepsilon[v,\xi]&:=\mathcal{Q}^\varepsilon[v,\xi]
								-\partial_{\xi}U^{\varepsilon}(\cdot;\xi)\,\rho^\varepsilon[\xi,v].
\end{align*}

\section{Estimates for the solution to the  `` quasi linearized " system}\label{Secmeta:line}

Summarizing, the couple $(v,\xi)$ solves the differential system \eqref{eqxiNL}-\eqref{eqvNL}
with initial conditions given by
\begin{equation*}
	\langle \psi^\varepsilon_1(\cdot;\xi_0), u_0-U(\cdot;\xi_0)\rangle =0, \qquad v_0=u_0-U(\cdot;\xi_0).
\end{equation*}
Neglecting the $o(v)$ order terms, we obtain the ``quasi-linearized" system
\begin{equation}\label{LS}
 	\left\{\begin{aligned}
	\frac{d\xi}{dt}&=\theta^\varepsilon(\xi)\bigl(1 
		+\langle\partial_{\xi} \psi^\varepsilon_1, v \rangle\bigr), \\
	\partial_t v &= H^\varepsilon(\xi)+ ({\mathcal L}^\varepsilon_\xi+{\mathcal M}^\varepsilon_\xi)v,
 	\end{aligned}\right. 
\end{equation}
and our aim is to describe the behavior of the solution to \eqref{LS} in the regime
of small $\varepsilon$.
This system is obtained by linearizing with respect to $v$, and by keeping the nonlinear dependence on $\xi$, in order to trace the evolution of the layers far out from their equilibrium configuration.

 In \cite{MS} it has been  proven that the solution $v$ to \eqref{LS} can be decomposed as the sum of two functions, that is $v=z+R$, where $z$ is a function with a very fast decay in time, and the remainder $R$ can be bounded by a term that is small in $\varepsilon$ (for more details, see \cite[theorem 2.1]{MS}). The aim of this Section is to state a slightly different estimate for the perturbation $v$, and to present a new proof to obtain this optimal bound. 

Before state our result, let us introduce the hypotheses we need.
\vskip0.2cm
{\bf H1.} The family $\{U^{\varepsilon}(\cdot,\xi)\}$ is such that there exists smooth functions $\Omega^\varepsilon(\xi)$ such that
\begin{equation*}
	|\langle \psi(\cdot),{\mathcal F}^\varepsilon[U^{\varepsilon}(\cdot,\xi)]\rangle|
		\leq |\Omega^\varepsilon(\xi)|\,|\psi|_{{}_{\infty}} \qquad \forall\,\psi\in C(I)^n,
\end{equation*}
with $\Omega^\varepsilon$ converging to zero as $\varepsilon\to 0$, uniformly with respect to $\xi\in J$. Moreover, we require that there exists a value $\bar \xi \in J$ such that the element $U^\e(x;\bar \xi)$ corresponds to an exact steady state for the original equation.
\vskip0.2cm
{\bf H2.} The eigenvalues $\{\lambda^\varepsilon_k(\xi)\}_{{}_{k\in\N}}$ of the linearized operator $\mathcal{L}^\varepsilon_\xi$ are such that the first eigenvalue is real, negative, and
\begin{equation*}
	\lim_{\varepsilon \to 0}\lambda^\varepsilon_1(\xi) =0, \quad \quad \quad \lambda^\varepsilon_1(\xi)- \Re e\,\lambda^\varepsilon_k(\xi)>c_1 \quad {\rm and} \quad \Re e\,\lambda^\varepsilon_k(\xi) < -c_2
	\quad \textrm{for }k\geq 2.
\end{equation*}
for some constants $c_1,c_2>0$ independent on $k\in\N$, $\varepsilon>0$ and $\xi\in J$.
\vskip0.2cm
{\bf H3.} There exists a constant $C>0$ such that
\begin{equation*}
|\Omega^\varepsilon(\xi)| \leq C |\lambda^\varepsilon_1(\xi)|, \quad \forall \ \xi  \in J.
\end{equation*} 

\vskip0.2cm
{\bf H4.} Concerning the solution $z$ to the linear problem $\partial_t z=\mathcal L^\varepsilon_\xi z$, we require that there exists $\nu^\varepsilon>0$ such that for all $\xi \in J$, there exist a constant $\bar C$ such that
\begin{equation}\label{0H4}
|z(t)|_{{}_{L^2}} \leq \bar C |z_0|_{{}_{L^2}}e^{-\nu^\varepsilon t}, \quad  \forall \xi \in J
\end{equation}
\begin{remark}\rm{
The constant $\bar C$ could depend on $\xi$. In this specific case, since $\xi$ belongs to a bounded interval of the real line, if we suppose that $\xi \mapsto C_{\xi(t)}$ is a continuous function, then there exists a maximum of $C_\xi$  in $J$, namely $\bar C$.
}
\end{remark}

\vskip0.5cm

\begin{theorem}\label{LT}
Let hypotheses {\bf H1-2-3-4} be satisfied. Then, for $\varepsilon$ sufficiently small, the solution $v$ to \eqref{LS} satisfies
\begin{equation*}
|v|_{{}_{L^2}}(t) \leq  C|\Omega^\varepsilon|_{{}_{L^\infty}} t+e^{-\mu^\varepsilon t}|v_0|_{{}_{L^2}}, 
\end{equation*}
for some positive constant $C$ and
\begin{equation*}
\mu^\varepsilon :=\sup_{\xi}\{\lambda^\varepsilon_1(\xi)\}- C|\Omega^\varepsilon|_{{}_{L^\infty}}>0.
\end{equation*}
\end{theorem}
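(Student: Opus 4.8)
The plan is to work with the second equation of the quasi-linearized system \eqref{LS}, namely $\partial_t v = H^\varepsilon(\xi) + (\mathcal{L}^\varepsilon_\xi + \mathcal{M}^\varepsilon_\xi)v$, treating it as a linear nonautonomous evolution equation in $v$ where $\xi=\xi(t)$ is a prescribed time-dependent parameter coming from the first equation. The key structural observation is that the family of generators $\{\mathcal{L}^\varepsilon_\xi + \mathcal{M}^\varepsilon_\xi : \xi \in J\}$ forms a \emph{stable family of generators} in the sense of Kato's theory of nonautonomous linear evolution equations: by {\bf H4}, the principal part $\mathcal{L}^\varepsilon_\xi$ generates (for each frozen $\xi$) a semigroup with uniform bound $\bar C e^{-\nu^\varepsilon t}$, and the correction $\mathcal{M}^\varepsilon_\xi$ is a bounded rank-one perturbation whose operator norm is controlled by $|\partial_\xi U^\varepsilon|\,|\theta^\varepsilon(\xi)|\,|\partial_\xi\psi^\varepsilon_1|$, which by {\bf H1} and {\bf H3} is $O(|\Omega^\varepsilon|_{L^\infty})$, hence $O(|\lambda^\varepsilon_1|)$ and vanishing as $\varepsilon\to 0$. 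A bounded perturbation of a stable family is again stable, so there is an evolution operator $T^\varepsilon(t,s)$ with $|T^\varepsilon(t,s)|_{L^2\to L^2} \le \tilde C\, e^{-(\nu^\varepsilon - C|\Omega^\varepsilon|_{L^\infty})(t-s)}$; I would absorb $\tilde C$ into the definition of $\mu^\varepsilon$ (noting $\sup_\xi \lambda^\varepsilon_1(\xi) \le -\nu^\varepsilon$ up to renaming constants, via {\bf H2} and {\bf H4}).

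With the evolution operator in hand, the second step is the Duhamel representation
\begin{equation*}
v(t) = T^\varepsilon(t,0)v_0 + \int_0^t T^\varepsilon(t,s)\,H^\varepsilon(\xi(s))\,ds.
\end{equation*}
The first term is bounded by $e^{-\mu^\varepsilon t}|v_0|_{L^2}$ after the constant absorption. For the integral term, the crucial input is the bound on $H^\varepsilon$: recalling $H^\varepsilon(\cdot;\xi) = \mathcal{F}^\varepsilon[U^\varepsilon(\cdot;\xi)] - \partial_\xi U^\varepsilon(\cdot;\xi)\,\theta^\varepsilon(\xi)$, both summands are controlled in $L^2$ by $|\Omega^\varepsilon|_{L^\infty}$ — the first directly by {\bf H1} (testing against a basis, or more simply since $\mathcal{F}^\varepsilon[U^\varepsilon]$ itself is $O(\Omega^\varepsilon)$ pointwise), and the second because $\theta^\varepsilon(\xi) = \langle\psi^\varepsilon_1, \mathcal{F}^\varepsilon[U^\varepsilon]\rangle$ is $O(|\Omega^\varepsilon|_{L^\infty})$ by {\bf H1} while $\partial_\xi U^\varepsilon$ is bounded uniformly in $\xi$. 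Thus $|H^\varepsilon(\xi(s))|_{L^2} \le C|\Omega^\varepsilon|_{L^\infty}$ uniformly in $s$, and
\begin{equation*}
\Big|\int_0^t T^\varepsilon(t,s) H^\varepsilon(\xi(s))\,ds\Big|_{L^2} \le C|\Omega^\varepsilon|_{L^\infty}\int_0^t e^{-\mu^\varepsilon(t-s)}\,ds \le C|\Omega^\varepsilon|_{L^\infty}\, t,
\end{equation*}
where I use the trivial bound $\int_0^t e^{-\mu^\varepsilon(t-s)}ds \le t$ (one could also write $\le 1/\mu^\varepsilon$, but the statement is phrased with the linear-in-$t$ bound, presumably to have a clean estimate uniform as $\mu^\varepsilon\to 0$). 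Combining the two contributions gives exactly the claimed inequality.

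The main obstacle is the rigorous verification that $\{\mathcal{L}^\varepsilon_\xi + \mathcal{M}^\varepsilon_\xi\}_\xi$ is a stable family and that the associated evolution system exists with the stated uniform exponential bound — this requires checking the hypotheses of Kato's generation theorem (a common stability bound for finite products of resolvents, a common dense core invariant under all the $\mathcal{L}^\varepsilon_\xi$, and sufficient regularity of $\xi \mapsto \mathcal{L}^\varepsilon_\xi$ along the trajectory $\xi(t)$). The regularity in $\xi$ follows from the smoothness assumed in {\bf H1} together with the a priori control on $d\xi/dt$ from the first equation of \eqref{LS} (which is $O(|\theta^\varepsilon|) = O(|\Omega^\varepsilon|_{L^\infty})$, so $\xi$ varies slowly and stays in $J$ on the relevant timescales); the common core and stability bound are where the bulk of the technical work lies, and where one genuinely needs {\bf H4} rather than merely a spectral statement like {\bf H2}. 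A secondary, more routine point is propagating the perturbation bound: since $\|\mathcal{M}^\varepsilon_\xi\| \le C|\Omega^\varepsilon|_{L^\infty}$, a Gronwall-type argument on the perturbed evolution operator upgrades the decay rate $\nu^\varepsilon$ of the $\mathcal{L}^\varepsilon_\xi$-semigroups to $\nu^\varepsilon - C|\Omega^\varepsilon|_{L^\infty}$, which is positive for $\varepsilon$ small by {\bf H3} (since $|\Omega^\varepsilon|_{L^\infty} = o(1)$ while $\nu^\varepsilon$ stays bounded below, or at least dominates it); this is the origin of the definition of $\mu^\varepsilon$.
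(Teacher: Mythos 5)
Your proposal follows essentially the same route as the paper: the paper also treats the $v$-equation as a nonautonomous linear evolution equation, uses {\bf H4} to show $\{\mathcal{L}^\varepsilon_\xi\}$ is a stable family of generators (in Pazy's formulation of the Kato theory), invokes the bounded-perturbation theorem to absorb $\mathcal{M}^\varepsilon_\xi$ at the cost of shifting the stability constant by $C|\Omega^\varepsilon|_{L^\infty}$, and concludes via the Duhamel formula for the evolution system together with $|H^\varepsilon|\leq C|\Omega^\varepsilon|_{L^\infty}$ and the bound $\int_0^t e^{-\mu^\varepsilon(t-s)}\,ds\leq t$. The technical points you flag (domain independence, identification of $\nu^\varepsilon$ with $\sup_\xi\lambda_1^\varepsilon(\xi)$ up to constants) are exactly the ones the paper addresses, so the argument is correct and matches the published proof.
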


The proof of Theorem \ref{LT} we present here is based on the theory of {\it stable families of generators}, firstly developed by Pazy in \cite{Pazy83}; it is a generalization of the theory of semigroups for evolution systems of the form $\partial_t u= L u$, when the linear operator $L$ depends on time.

Hence, we will first summarize the tools we need to use.

Let us consider the initial value problem 
\begin{equation}\label{pazy}
\partial_t u = A(t) u+ f(t), \quad u(s)=u_0 \qquad 0 \leq s \leq t \leq T.
\end{equation}

\begin{ans}\label{def1}
Let $X$ a Banach space. A family $\{ A(t)\}_{t \in [0,T]}$ of infinitesimal generators of $C_0$ semigroups on $X$ is called stable if there are constants $M \geq 1$ and $\omega$ (called the stability constants) such that
\begin{equation*}
(\omega,+\infty) \subset \rho(A(t)), \quad {\rm for} \quad t \in [0,T]
\end{equation*}
and
\begin{equation*}
\left \| \Pi_{j=1}^k R(\lambda: A(t_j))\right \|  \leq M(\lambda-\omega)^{-k}, \quad {\rm} 
\end{equation*}
for $\lambda>\omega$ and for every finite sequence $0 \leq t_1 \leq t_2, . . . . , t_k \leq T$, $k=1,2,....$.
\end{ans}

If, for $t \in [0,T]$, $A(t)$ is the infinitesimal generator of a $C_0$ semigroup $S_t(s)$, $s \geq 0$ satisfying $\| S_t(s)\| \leq e^{\omega s}$, then the family $\{ A(t)\}_{t \in [0,T]}$ is clearly stable with constants $M=1$ and $\omega$. Precisely, if  the operator $A(t)$ generates a $C_0$ semigroup $S_t(s)$ for every fixed $t \in [0,T]$, and we can find an estimate for $\| S_t(s)\|$ that is independent of $t$, then the whole family $\{ A(t)\}_{t \in [0,T]}$ is stable in the sense of Definition \ref{def1}.

\begin{theorem}\label{thpaz1}
Let $\{ A(t)\}_{t \in [0,T]}$ be a stable family of infinitesimal generators with stability constants $M$ and $\omega$. Let $B(t)$, $0 \leq t \leq T$ be a bounded linear operators on $X$. If $\| B(t)\| \leq K$ for all $t \leq T$, then $\{ A(t)+ B(t)\}_{t \in [0,T]}$ is a stable family of infinitesimal generators with stability constants $M$ and $\omega+ MK$.
\end{theorem}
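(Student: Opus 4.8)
The plan is to verify directly the two requirements of Definition \ref{def1} for the perturbed family $\{A(t)+B(t)\}$, using only the stability estimates already available for $\{A(t)\}$; the essential point is to reproduce the \emph{precise} stability constant $\omega+MK$ rather than a cruder one. First I would record that, since each $A(t)$ generates a $C_0$ semigroup and $B(t)$ is bounded, the bounded perturbation theorem guarantees that $A(t)+B(t)$ is again the infinitesimal generator of a $C_0$ semigroup, so it remains only to locate the resolvent set and to bound the products of resolvents. Writing $R_t:=R(\lambda;A(t))$, stability of $\{A(t)\}$ with $k=1$ gives $\|R_t\|\leq M/(\lambda-\omega)$; hence for $\lambda>\omega+MK$ one has $\|B(t)R_t\|\leq MK/(\lambda-\omega)<1$, so $I-B(t)R_t$ is boundedly invertible, and the identity $\lambda-A(t)-B(t)=(I-B(t)R_t)(\lambda-A(t))$ yields $(\omega+MK,+\infty)\subset\rho(A(t)+B(t))$ together with the Neumann expansion $R(\lambda;A(t)+B(t))=\sum_{n\geq0}R_t(B(t)R_t)^n$.

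The core of the argument is the product estimate. I would substitute the Neumann series into $\prod_{j=1}^k R(\lambda;A(t_j)+B(t_j))$ and expand, obtaining a sum over multi-indices $\vec n=(n_1,\dots,n_k)$ of terms of the form $\prod_{j=1}^k R_{t_j}(B(t_j)R_{t_j})^{n_j}$. The key structural observation is that, because the indices $t_1\leq\cdots\leq t_k$ are monotone, the resolvents appearing in each such term occur in monotone order of their time parameter, exactly the ordering required by Definition \ref{def1}; the bounded operators $B(t_j)$ merely split the string of resolvents into consecutive blocks. I would then bound a single term by applying the stability hypothesis for $\{A(t)\}$ to each block of resolvents and the bound $\|B(t_j)\|\leq K$ to each separating factor. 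Writing $N:=n_1+\cdots+n_k$ for the total number of $B$-factors, there are at most $N+1$ blocks and $k+N$ resolvents in all, giving
\begin{equation*}
\Bigl\|\prod_{j=1}^k R_{t_j}(B(t_j)R_{t_j})^{n_j}\Bigr\|\leq\frac{M^{N+1}K^N}{(\lambda-\omega)^{k+N}}.
\end{equation*}

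Summing over $\vec n$ then completes the proof, and this is where the exact constant appears. Grouping the multi-indices by the value of $N$, there are $\binom{N+k-1}{k-1}$ of them with $n_1+\cdots+n_k=N$, so the total is controlled by $\frac{M}{(\lambda-\omega)^k}\sum_{N\geq0}\binom{N+k-1}{k-1}\bigl(\tfrac{MK}{\lambda-\omega}\bigr)^N$; the negative binomial series $\sum_{N\geq0}\binom{N+k-1}{k-1}x^N=(1-x)^{-k}$, valid for $x=MK/(\lambda-\omega)<1$, collapses this to $M/(\lambda-\omega-MK)^k$, which is precisely the stability estimate for $\{A(t)+B(t)\}$ with constants $M$ and $\omega+MK$. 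I expect the main obstacle to be exactly this bookkeeping that produces $\omega+MK$ and not the weaker $\omega+K$: the factor $M$ inside $MK$ arises only because each of the (up to $N+1$) resolvent blocks contributes its own factor $M$, so the grouping must track one $M$ per block while remaining loose enough to estimate the $B$-factors blockwise, and the combinatorial count of multi-indices must be matched exactly against the geometric weight $(MK/(\lambda-\omega))^N$.
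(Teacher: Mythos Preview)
Your argument is correct and is, in fact, the classical proof from Pazy's book (Chapter~5, Theorem~2.3). However, there is nothing to compare it with here: the paper does not supply its own proof of this theorem. It is quoted verbatim as a known tool from \cite{Pazy83} and then applied in the proof of Theorem~\ref{LT}, where the stability of $\{\mathcal L^\varepsilon_{\xi(t)}\}$ together with the bound $\|\mathcal M^\varepsilon_{\xi}\|\le c_1|\Omega^\varepsilon|_{L^\infty}$ immediately gives stability of $\{\mathcal L^\varepsilon_{\xi(t)}+\mathcal M^\varepsilon_{\xi(t)}\}$ with $\omega=-|\Lambda_1^\varepsilon|+C|\Omega^\varepsilon|_{L^\infty}$. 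So your write-up is not redundant with anything in the paper; it simply fills in the reference.

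One small point of rigor you may want to tighten: when you say ``there are at most $N+1$ blocks,'' it is in fact exactly $N+1$ blocks (the string of resolvents begins and ends with an $R$, and each of the $N$ factors $B(t_j)$ creates one cut), and you implicitly use this exact count when you write the bound $M^{N+1}K^N/(\lambda-\omega)^{k+N}$. If it were genuinely ``at most,'' the exponent on $M$ would be an inequality and the negative-binomial summation would not close to the sharp constant.
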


Now we prove the existence of the so  called {\it evolution system} $U(t,s)$ for the initial value problem \eqref{pazy}, that is a generalization of the semigroup generated by a linear operator $A$, when such operator depends on time. To this aim, let us state the following result (for more details, see \cite[Theorem 2.3, Theorem 3.1, Theorem 4.2]{Pazy83}.

\begin{theorem}\label{thpaz3}
Let $\{ A(t)\}_{t \in [0,T]}$ be a stable family of infinitesimal generators of $C_0$ semigroups on $X$. If $D(A(t))=D$ is independent on $t$ and for $u_0 \in D$, $A(t)u_0$ is continuously differentiable in $X$, then there exists a unique evolution system $U(t,s)$, $0 \leq s \leq t \leq T$, satisfying 
 \begin{equation}\label{01}
 \| U(t,s) \| \leq Me^{\omega (t-s)}, \quad {\rm for } \quad 0 \leq s \leq t \leq T.
 \end{equation}
Morevoer, if $f \in C([s,T],X)$, then, for every $u_0 \in X$, the initial value problem \eqref{pazy} has a unique solution given by 
 \begin{equation}\label{02}
 u(t)= U(t,s)u_0 + \int_s^t U(t,r) f(r) \ dr.
 \end{equation}
for all $0 \leq s \leq t \leq T$.
\end{theorem}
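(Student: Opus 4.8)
The plan is to follow the classical hyperbolic construction of Pazy: build the evolution system $U(t,s)$ by approximating the time-dependent generator, pass to the limit using stability, and then verify that the variation-of-parameters formula \eqref{02} produces the unique solution. First I would freeze the generator on a mesh. For each $n$, partition $[0,T]$ by $t^n_j=jT/n$ and set $A_n(\tau)=A(t^n_j)$ for $\tau\in[t^n_j,t^n_{j+1})$. Since each $A(t^n_j)$ generates a $C_0$ semigroup, one pieces the corresponding semigroups together to obtain an approximate evolution system $U_n(t,s)$, explicitly a finite product of factors $e^{(\cdot)A(t^n_j)}$ for which the cocycle and continuity axioms are elementary. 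The stability hypothesis of Definition \ref{def1}, applied to products of resolvents (equivalently of the frozen semigroups), yields the uniform bound $\|U_n(t,s)\|\le Me^{\omega(t-s)}$ independent of $n$; this is precisely the estimate that must survive in the limit as \eqref{01}. The Yosida regularization $A(t)J_\lambda(t)$, $J_\lambda(t)=\lambda R(\lambda,A(t))$, is an equally good device and replaces the frozen generators by bounded ones for which $U_n$ is given by a convergent Dyson series.

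Next I would prove that $U_n(t,s)v$ is Cauchy in $n$ for every $v$ in the common domain $D$. Because all $A(t)$ share the domain $D$, each frozen semigroup leaves $D$ invariant, so $U_m(\tau,s)v\in D$ for all $\tau$. Applying the fundamental theorem of calculus to $\tau\mapsto U_n(t,\tau)U_m(\tau,s)v$ gives the representation
\[
U_m(t,s)v-U_n(t,s)v=\int_s^t U_n(t,\tau)\,[A_m(\tau)-A_n(\tau)]\,U_m(\tau,s)v\,d\tau .
\]
The hypothesis that $\tau\mapsto A(\tau)v$ is continuously differentiable on the compact interval $[0,T]$ makes it uniformly continuous, so for fixed $v$ the meshes force $[A_m(\tau)-A_n(\tau)]v\to 0$; combined with the uniform bound on $U_n$ this drives the integral to zero, provided one can control $\|A(\tau)U_m(\tau,s)v\|$ uniformly. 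Hence $U_n(t,s)v\to U(t,s)v$ uniformly in $(t,s)$ for $v\in D$, and by density of $D$ together with the uniform bound the limit extends to all of $X$ and inherits \eqref{01}.

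With $U(t,s)$ in hand I would verify the evolution-system axioms --- $U(s,s)=I$, the cocycle identity $U(t,r)U(r,s)=U(t,s)$ for $s\le r\le t$, and joint strong continuity --- each of which passes to the limit from the corresponding property of $U_n$. The key differentiability statements, that for $v\in D$ one has $\partial_t U(t,s)v=A(t)U(t,s)v$ and $\partial_s U(t,s)v=-U(t,s)A(s)v$, follow from the same approximation once convergence and the $C^1$-regularity of $A(\cdot)v$ are in hand. Here Theorem \ref{thpaz1} is useful: it shows stability is preserved under bounded perturbations, so one may split $A(t)$ into a stable part plus a bounded part, pass the construction through the stable part, and then reincorporate the perturbation with the shifted constant $\omega+MK$, rather than re-verifying stability for the full family directly.

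Finally, for the inhomogeneous problem \eqref{pazy} I would define $u(t)$ by \eqref{02} and check directly that it solves the equation: for $u_0\in D$ and $f\in C([s,T],X)$, the homogeneous term $U(t,s)u_0$ supplies both the initial value and the contribution $A(t)U(t,s)u_0$, while differentiating the Duhamel integral $\int_s^t U(t,r)f(r)\,dr$ via $\partial_t U(t,r)=A(t)U(t,r)$ and the continuity of $f$ produces $f(t)+A(t)\int_s^t U(t,r)f(r)\,dr$, so that $\partial_t u=A(t)u+f$. Uniqueness is the cleanest step: if $u_1,u_2$ both solve \eqref{pazy}, their difference $w$ solves the homogeneous problem with $w(s)=0$, and differentiating $\tau\mapsto U(t,\tau)w(\tau)$ forces $w\equiv 0$. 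I expect the main obstacle to be the convergence argument of the second paragraph, where stability (supplying the uniform bound $Me^{\omega(t-s)}$ on products of frozen semigroups) and the differentiability of $A(\cdot)v$ must be combined; the delicate point is upgrading the pointwise $C^1$ hypothesis to a graph-norm control of $A(\tau)U_m(\tau,s)v$ along the orbit, which is what keeps the integrand $[A_m-A_n]U_m v$ small uniformly and prevents the estimate from degenerating.
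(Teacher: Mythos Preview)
The paper does not prove this theorem at all: it is quoted as a known result from \cite{Pazy83} (specifically, the author writes ``let us state the following result (for more details, see [Theorem 2.3, Theorem 3.1, Theorem 4.2]\{Pazy83\})''), and it is used as a black box in the proof of Theorem~\ref{LT}. So there is no ``paper's own proof'' to compare against.

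That said, your outline is exactly the classical construction carried out in Pazy's Chapter~5: piecewise-constant (or Yosida) approximations $U_n$, the uniform bound $Me^{\omega(t-s)}$ from stability, the telescoping identity for $U_m-U_n$, passage to the limit on $D$ and extension by density, and finally Duhamel plus the standard uniqueness trick. In that sense your proposal matches the cited reference. One small comment: your invocation of Theorem~\ref{thpaz1} as a device to ``split $A(t)$ into a stable part plus a bounded part'' is not part of the proof of this existence theorem itself; in the paper Theorem~\ref{thpaz1} is used downstream, in the proof of Theorem~\ref{LT}, to conclude that $\mathcal L^\varepsilon_\xi+\mathcal M^\varepsilon_\xi$ is stable once $\mathcal L^\varepsilon_\xi$ is. You can safely drop that digression from your sketch.
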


 \vskip0.5cm
\begin{proof}[Proof of Theorem 3.2]
First of all, let us notice that $\mathcal M^\varepsilon_\xi$ is a bounded operator that satisfies the estimate
\begin{equation}\label{asyMH}
\begin{aligned}
\|\mathcal M^\varepsilon_\xi \|_{\mathcal L(L^2;\R)} \leq c_1|\theta^\varepsilon(\xi)| \leq c_1 |\Omega^\varepsilon|_{{}_{L^\infty}}, \quad \forall \xi \in J,
\end{aligned}
\end{equation}
while the term $H^\varepsilon(\xi)$ is such that
\begin{equation}\label{asyMH2}
|H^\varepsilon|_{{}_{L^\infty}} \leq c_2| \Omega^\varepsilon|_{{}_{L^\infty}}.
\end{equation}
For some positive constants $c_1$ and $c_2$. Next, we want to show that $\mathcal L^\varepsilon_\xi+\mathcal M^\varepsilon_\xi$ is the infinitesimal generator of a $C_0$ semigroup $\mathcal T_\xi(t,s)$. To this aim, concerning the eigenvalues of the linear operator $\mathcal L^\varepsilon_\xi$, we know that $\lambda_1^\varepsilon(\xi)$ is negative and goes to zero as $\varepsilon \to 0$, for all $\xi \in J$. Hence, defining $\Lambda_1^\varepsilon:=\sup_\xi \lambda_1^\varepsilon(\xi)$, we have $\lambda_k^\varepsilon \leq -|\Lambda_1^\varepsilon| <0$, for all $k \geq 1$. By using Definition \ref{def1} and the following remark, we know that, for $t \in [0,T]$, $\mathcal L^\varepsilon_{\xi(t)}$ is the infinitesimal generator of a $C_0$ semigroup $\mathcal S_{\xi(t)}(s)$, $s>0$. Furthermore,  since {\bf H4}  holds, we get
\begin{equation*}
\| \mathcal S_{\xi(t)}(s)\| \leq \bar C e^{-|\Lambda_1^\varepsilon|s},
\end{equation*}
and this estimate is independent on $t$, so that the family $\{ \mathcal L^\varepsilon_{\xi(t)}\}_{\xi(t) \in J}$ is stable with stability constants $M=\bar C$ and $\omega=-|\Lambda_1^\varepsilon|$. Furthermore, since
\begin{equation*}
\|\mathcal M^\varepsilon_\xi \|_{\mathcal L(L^2;\R)}  \leq c_1 |\Omega^\varepsilon|_{{}_{L^\infty}}, \quad \forall \xi \in J,
\end{equation*}
Theorem \ref{thpaz1} states that the family $\{ \mathcal L^\varepsilon_{\xi(t)}+ \mathcal M^\varepsilon_{\xi(t)}\}_{\xi(t) \in J}$ is stable with $M=\bar C$ and $\omega= -|\Lambda_1^\varepsilon|+C|\Omega^\varepsilon|_{{}_{L^\infty}} $. In particular, $\omega$ is negative since {\bf H3} holds.

Going further, in order to apply Theorem \ref{thpaz3}, we need to check that the domain of $\mathcal L^\varepsilon_\xi+\mathcal M^\varepsilon_\xi$ does not depend on time;  this is true since $\mathcal L^\varepsilon_\xi+\mathcal M^\varepsilon_\xi$ depends on time through the function $U^\varepsilon(x;\xi(t))$, that does not appear in the higher order terms of the operator. More precisely, the principal part of the operator does not depend on $\xi(t)$.

Hence, we can define $\mathcal T_\xi(t,s)$ as the {\it evolution system} of $\partial_t v=(\mathcal L^\varepsilon_\xi+ \mathcal M^\varepsilon_\xi)v$, so that
\begin{equation}\label{Yfamiglieevol}
v(t)=\mathcal T_\xi(t,s)v_0+\int_s^t \mathcal T_\xi(t,r)H^\varepsilon(x;\xi(r)) dr, \quad 0 \leq s \leq t 
\end{equation} 
Moreover, there holds
\begin{equation*}
\|\mathcal T_\xi(t,s)\| \leq \bar Ce^{-\mu^\varepsilon (t-s)}, \qquad \mu^\varepsilon := |\Lambda^\varepsilon_1|- C|\Omega^\varepsilon|_{{}_{L^\infty}}>0.
\end{equation*}
Finally, from the representation formula \eqref{Yfamiglieevol} with $s=0$, it follows 
\begin{equation}\label{finalestY}
|v|_{{}_{L^2}}(t) \leq e^{-\mu^\varepsilon t}|v_0|_{{}_{L^2}}+ \sup_{\xi \in I}|H^\varepsilon|_{{}_{L^\infty}}(\xi) \int_0^t e^{-\mu^\varepsilon(t-r)} \ dr,  \quad \forall \, t \geq 0,
\end{equation}
so that, by using \eqref{asyMH2}, we end up with
\begin{equation}\label{stimafinaleY}
|v|_{{}_{L^2}}(t) \leq  c_2|\Omega^\varepsilon|_{{}_{L^\infty}}t+e^{-\mu^\varepsilon t}|v_0|_{L^2}.
\end{equation}
and the proof is completed.
\end{proof}

\begin{remark}
\rm{ Let us underline that, with the new proof of \cite[Theorem 2.1]{MS} proposed here, we no longer need the following hypothesis stated in \cite{MS} 
\begin{equation*}
	\sum_{j} \langle \partial_\xi \psi^\varepsilon_k, \varphi^\varepsilon_j\rangle^2
	=\sum_{j} \langle \psi^\varepsilon_k, \partial_\xi \varphi^\varepsilon_j\rangle^2
	\leq C
	\qquad\qquad\forall\,k,
\end{equation*}
and concerning the convergence of the series of the eigenfunctions and their derivatives with respect to $\xi$. Since, at a first step, the approximate system gives a good qualitative picture of the behaviour of the
solution, this is a major improvement, since this hypothesis is not easy to check in specific situations. Also, we end up with an estimate for the perturbation $v$ that, as in \cite{MS}, states that the function $v$ can be decomposed as the sum of a function $z$ such that $|z|_{{}_{L^2}} \leq |v_0|_{{}_{L^2}} e^{-\mu^\varepsilon t}$, plus a remainder that can be bounded by $|\Omega^\varepsilon|_{{}_{L^\infty}}$.

\vskip0.5cm
Estimate \eqref{stimafinaleY} can be used to decouple the system \eqref{LS} in order to obtain an equation of motion for the parameter $\xi(t)$. Indeed, we can state and prove the following consequence of Theorem \ref{LT}.

\begin{corollary}
Let the hypothesis of Theorem \ref{LT} be satisfied. Let us also assume that
\begin{equation*}
	(\xi-\bar \xi)\,\theta^\varepsilon(\xi)<0\quad\textrm{ for any } \xi\in J,\,\xi\neq \bar \xi
	\qquad\textrm{ and }\qquad
	{\theta^\varepsilon}'(\bar \xi)<0.
\end{equation*}
Then, for $\varepsilon$ and $|{v_0}|_{{}_{L^2}}$ sufficiently small, the solution $\xi(t)$ converges exponentially to $\bar \xi$ as $t\to+\infty$.

\end{corollary}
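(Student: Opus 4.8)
The plan is to use Theorem~\ref{LT} as a black box to control the perturbation $v$, and then to treat the scalar ODE \eqref{eqxiNL} (in its quasi-linearized form, the first equation of \eqref{LS}) as a perturbation of the autonomous equation $\dot\xi=\theta^\varepsilon(\xi)$ whose dynamics is dictated by the sign conditions on $\theta^\varepsilon$. First I would record that, by Theorem~\ref{LT}, $|v(t)|_{{}_{L^2}}\le C|\Omega^\varepsilon|_{{}_{L^\infty}}t+e^{-\mu^\varepsilon t}|v_0|_{{}_{L^2}}$; although the linear-in-$t$ term grows, it is multiplied by $|\Omega^\varepsilon|_{{}_{L^\infty}}$, which is small in $\varepsilon$, so on the (exponentially long) time scale $t\lesssim 1/|\Omega^\varepsilon|_{{}_{L^\infty}}$ the norm $|v(t)|_{{}_{L^2}}$ stays bounded by a small quantity $O(|\Omega^\varepsilon|_{{}_{L^\infty}})+O(|v_0|_{{}_{L^2}})$. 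Hence, using $\|\partial_\xi\psi^\varepsilon_1\|$ bounded (as in the standing assumptions behind \eqref{asyMH}), the factor $1+\langle\partial_\xi\psi^\varepsilon_1,v\rangle$ in the first equation of \eqref{LS} satisfies $|\langle\partial_\xi\psi^\varepsilon_1,v\rangle|\le \tfrac12$ for $\varepsilon$ and $|v_0|_{{}_{L^2}}$ small, so that
\begin{equation*}
	\tfrac12\,\theta^\varepsilon(\xi)\ \text{has the same sign as}\ \theta^\varepsilon(\xi)(1+\langle\partial_\xi\psi^\varepsilon_1,v\rangle),
\end{equation*}
and the magnitude is comparable to $|\theta^\varepsilon(\xi)|$ up to a factor in $[\tfrac12,\tfrac32]$.

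Next I would build a Lyapunov argument for $\xi$. Consider $W(t):=\tfrac12(\xi(t)-\bar\xi)^2$. Differentiating along \eqref{LS},
\begin{equation*}
	\dot W=(\xi-\bar\xi)\,\theta^\varepsilon(\xi)\bigl(1+\langle\partial_\xi\psi^\varepsilon_1,v\rangle\bigr).
\end{equation*}
By the hypothesis $(\xi-\bar\xi)\theta^\varepsilon(\xi)<0$ for $\xi\neq\bar\xi$ together with the sign-preservation just established, $\dot W<0$ whenever $\xi\neq\bar\xi$, so $W$ is strictly decreasing and $\xi(t)$ converges monotonically to $\bar\xi$ (it cannot leave $J$, and $\bar\xi$ is the only equilibrium by {\bf H1} and the strict sign condition). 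To upgrade this to \emph{exponential} convergence I would localize near $\bar\xi$: the condition ${\theta^\varepsilon}'(\bar\xi)<0$ gives, by Taylor expansion, a constant $\kappa>0$ and a neighborhood $(\bar\xi-r,\bar\xi+r)$ on which $(\xi-\bar\xi)\theta^\varepsilon(\xi)\le -\kappa(\xi-\bar\xi)^2$; combined with the factor $\ge\tfrac12$ this yields $\dot W\le -\kappa W$ once $\xi(t)$ has entered this neighborhood, hence $|\xi(t)-\bar\xi|\le e^{-\kappa(t-t_0)/2}|\xi(t_0)-\bar\xi|$ for $t\ge t_0$. Since the monotone decrease of $W$ forces $\xi(t)$ to enter that neighborhood in finite time $t_0$ (depending on the initial gap), exponential convergence on $[t_0,\infty)$ follows, and one absorbs the bounded transient on $[0,t_0]$ into the constant.

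The one delicate point, and the main obstacle, is the interplay of time scales: Theorem~\ref{LT} only guarantees smallness of $|v(t)|_{{}_{L^2}}$ for $t$ up to order $1/|\Omega^\varepsilon|_{{}_{L^\infty}}$, whereas the exponential-convergence conclusion is an $t\to+\infty$ statement. I would close this gap by a continuation/bootstrap argument: one shows that on the time interval where $|v|_{{}_{L^2}}$ is small, $\xi$ enters the neighborhood of $\bar\xi$ where $\dot W\le-\kappa W$, and once there, the quasi-linearized $v$-equation $\partial_t v=H^\varepsilon(\xi)+(\mathcal L^\varepsilon_\xi+\mathcal M^\varepsilon_\xi)v$ with $\|\mathcal T_\xi(t,s)\|\le\bar Ce^{-\mu^\varepsilon(t-s)}$ and $|H^\varepsilon|_{{}_{L^\infty}}\le c_2|\Omega^\varepsilon|_{{}_{L^\infty}}$ gives the uniform-in-time bound $|v(t)|_{{}_{L^2}}\le \bar Ce^{-\mu^\varepsilon t}|v_0|_{{}_{L^2}}+\tfrac{\bar Cc_2}{\mu^\varepsilon}|\Omega^\varepsilon|_{{}_{L^\infty}}$ from \eqref{Yfamiglieevol} — which is small for all $t\ge0$, not merely on a finite interval. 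With that uniform bound the sign-preservation of $\theta^\varepsilon$ and the estimate $\dot W\le-\kappa W$ persist for all $t\ge t_0$, completing the proof; the smallness requirements on $\varepsilon$ and $|v_0|_{{}_{L^2}}$ are exactly what make $|\langle\partial_\xi\psi^\varepsilon_1,v\rangle|\le\tfrac12$ hold throughout.
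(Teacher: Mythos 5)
Your argument is correct and follows the same basic route as the paper: use the $L^2$ bound on $v$ from Theorem \ref{LT} to reduce the first equation of \eqref{LS} to a small perturbation of $\dot\xi=\theta^\varepsilon(\xi)$, then exploit the sign condition $(\xi-\bar\xi)\theta^\varepsilon(\xi)<0$ together with ${\theta^\varepsilon}'(\bar\xi)<0$ to get exponential convergence. The paper carries this out heuristically ($d\xi/dt\sim\theta^\varepsilon(\xi)$, separation of variables on the linearization $\theta^\varepsilon(\xi)\sim{\theta^\varepsilon}'(\bar\xi)(\xi-\bar\xi)$, rate $\beta^\varepsilon\sim-{\theta^\varepsilon}'(\bar\xi)$), whereas you formalize the same content with the Lyapunov function $W=\tfrac12(\xi-\bar\xi)^2$ and the differential inequality $\dot W\le-\kappa W$ near $\bar\xi$; that is a cleaner but not essentially different argument. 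Where you genuinely add something is the time-scale issue: estimate \eqref{stimafinaleY} contains the term $C|\Omega^\varepsilon|_{L^\infty}t$, which is not uniformly small in $t$, and the paper's proof of the corollary silently replaces it by $C|\Omega^\varepsilon|_{L^\infty}$. Your remark that \eqref{Yfamiglieevol} in fact yields the uniform-in-time bound $|v(t)|_{L^2}\le\bar Ce^{-\mu^\varepsilon t}|v_0|_{L^2}+\bar C c_2|\Omega^\varepsilon|_{L^\infty}/\mu^\varepsilon$ (integrating the exponential kernel to $1/\mu^\varepsilon$ rather than to $t$) is exactly what legitimizes the decoupling for all $t\ge 0$. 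Be aware, though, that the smallness of that remainder then rests on $|\Omega^\varepsilon|_{L^\infty}/\mu^\varepsilon$ being small, which does not follow from {\bf H3} alone ({\bf H3} controls $\Omega^\varepsilon(\xi)$ by $|\lambda_1^\varepsilon(\xi)|$ pointwise, while $\mu^\varepsilon$ is built from $\sup_\xi\lambda_1^\varepsilon$); a mild extra uniformity is being used there, by your argument and by the paper's alike.
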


\begin{proof}
Since \eqref{stimafinaleY} holds, we get
\begin{equation*}
\frac{d\xi}{dt}=\theta^\varepsilon(\xi)(1+r) \quad {\rm with} \quad |r| \leq  C|\Omega^\varepsilon|_{{}_{L^\infty}} +e^{-\mu^\varepsilon t}|v_0|_{{}_{L^2}},
\end{equation*}
so that, for $\varepsilon$ and $|v_0|_{{}_{L^2}}$ sufficiently small,
$
\frac{d\xi}{dt}\sim\theta^\varepsilon(\xi).
$
By a method of separation of variables, and since $\theta^\varepsilon(\xi) \sim {\theta^\varepsilon}'(\bar \xi) (\xi -\bar \xi) $, we end up with
\begin{equation*}
\xi(t) \sim \bar \xi+ \xi_0 e^{-\beta^\varepsilon t}, \quad \beta^\varepsilon \sim -  {\theta^\varepsilon}'(\bar \xi),
\end{equation*}
where $\beta^\varepsilon >0$ goes to zero as $\varepsilon \to 0$, and $ \bar\xi$ corresponds to the asymptotic location for the parameter $\xi$ (we recall that $U^\varepsilon(x,\bar \xi)$ is an exact steady state for the system).

\end{proof}
In particular, the motion of the solution towards its asymptotic configuration is much slower as $\varepsilon$ becomes smaller, since the speed rate of convergence is given by $\beta^\varepsilon$. For example, in the case of a single scalar conservation law and for the Allen-Cahn equation, both $\mu^\varepsilon$ and $\beta^\varepsilon$ behave like $e^{-1/\varepsilon}$ (see \cite{MS} and \cite{Str14} respectively), and the convergence of the interfaces towards their equilibrium configuration is indeed  exponentially slow.

}
\end{remark}

\section{Applications to a viscous scalar conservation law and to a relaxation system}

 Our aim in this section is to show how the general approach just presented applies to some explicit examples. In particular, we show that the hypotheses {\bf{ H1-H4}} of Theorem \ref{LT} can be explicitly checked.

 \subsection{Scalar conservation laws}\label{Burg}  We consider the case of the scalar viscous Burgers equation, that is   
 \begin{equation}\label{cauchyBurg1}
		\partial_t u +u \partial_x u	=\varepsilon\,\partial_x^2u, 
		\qquad\qquad x\in I:=(-\ell,\ell),
\end{equation}
with initial and boundary conditions given by
\begin{equation*}
	u(x,0)=u_0(x)\qquad x\in I,
	\qquad\textrm{and}\qquad
	u(\pm\ell,t)=\mp u_\ast \qquad t>0.
\end{equation*}
for some $u_\ast>0$.

The first step is the construction of the family of approximate steady states $\{ U^\varepsilon(x;\xi)\}$; we consider a function obtained by matching two different steady states satisfying, respectively, the left 
and the right boundary condition together with the request $U^\varepsilon(\xi)=0$; in formulas,
\begin{equation}\label{approxU}
	U^{\varepsilon}(x;\xi)=\left\{\begin{aligned}
		&\kappa_- \tanh \left(\kappa_-(\xi-x)/2\varepsilon\right) &\qquad &{\rm in}\quad (-\ell,\xi) \\
		&\kappa_+ \tanh\left(\kappa_+(\xi-x)/2\varepsilon\right) &\qquad &{\rm in}\quad (\xi,\ell),
           \end{aligned}\right.
\end{equation}
where $\kappa_\pm$ are chosen so that the boundary conditions are satisfied.
By direct substitution we obtain the identity
\begin{equation}\label{omega0}
	{\mathcal F}^\varepsilon[U^{\varepsilon}(\cdot;\xi)]
		=\ldbrack \partial_x U^{\varepsilon}\rdbrack_{{}_{x=\xi}}\delta_{{}_{x=\xi}}
\end{equation}
in the sense of distributions, where $\delta_{x=\xi}$ the usual Dirac's delta distribution centered 
in $x=\xi$. Going further, by differentiation, we have
\begin{equation*}
	\ldbrack \partial_x U^{\varepsilon}\rdbrack_{{}_{x=\xi}}
		=\frac{1}{2\varepsilon}(\kappa_--\kappa_+)(\kappa_-+\kappa_+).
\end{equation*}
It is possible to obtain an asymptotic description of the values $\kappa_\pm$ (for more details, see \cite[Section 3]{MS}),
that gives the asymptotic representation
\begin{equation}\label{omega1}
	\begin{aligned}
	\ldbrack \partial_x U^{\varepsilon}\rdbrack_{{}_{x=\xi}}
		&=\frac{2\,u_\ast^2}{\varepsilon}(e^{-u_\ast(\ell+\xi)/\varepsilon}-e^{-u_\ast(\ell-\xi)/\varepsilon})+l.o.t.
			\sim C\,\xi\,e^{-C/\varepsilon},
	\end{aligned}
\end{equation}
showing that the term $\ldbrack \partial_x U^{\varepsilon}\rdbrack_{{}_{x=\xi}}$ is null at $\xi=0$ 
and exponentially small for $\varepsilon\to 0$. In particular, $U^{\varepsilon}(\cdot,\xi)$ is a stationary solution to \eqref{cauchyBurg1} if and only if $\xi=0$, corresponding  to the location of the unique steady state. Finally, gathering \eqref{omega0} and \eqref{omega1}, we have
\begin{equation*}
\Omega^\varepsilon(\xi) \sim \frac{2\,u_\ast^2}{\varepsilon}(e^{-u_\ast(\ell+\xi)/\varepsilon}-e^{-u_\ast(\ell-\xi)/\varepsilon}) =\frac{4\,u_\ast^2}{\varepsilon}|\sinh(u_\ast\,\xi/\varepsilon)|\,e^{-u_\ast \ell/\varepsilon},
\end{equation*}
showing that hypothesis {\bf H1} is satisfied for the viscous Burgers equation.

Next step is to perform a spectral analysis for the linearized operator arising from the linearization of \eqref{cauchyBurg1} around $U^\varepsilon(x;\xi)$. In this specific case, $\mathcal{L}^\varepsilon_\xi $ takes the form 
\begin{equation*}
	\mathcal{L}^\varepsilon_\xi v:=\varepsilon \partial_x^2v-\partial_x \bigl(U^{\varepsilon}(\cdot;\xi)\,v\bigr).
\end{equation*}
Recalling the spectral analysis performed in \cite[Section 4]{MS}, we obtain the following asymptotic expression for the first eigenvalue of the linearized  operator $\mathcal{L}^\varepsilon_\xi$
\begin{equation*}
	\lambda_1^\varepsilon\sim - \frac{u_\ast^2}{2\varepsilon}
		\left(e^{-u_\ast(\ell-\xi)/\varepsilon}+e^{-u_\ast(\ell+\xi)/\varepsilon}\right)
		=-\frac{u_\ast^2}{\varepsilon}\cosh(u_\ast\xi/\varepsilon)\,e^{-u_\ast\ell/\varepsilon},
\end{equation*}
to be compared with the expression for $\Omega^\varepsilon$, so that
\begin{equation*}
	\left|\frac{\Omega^\varepsilon}{\lambda_1^\varepsilon}\right|\sim 4|\tanh(u_\ast\,\xi/\varepsilon)|\leq 4.
\end{equation*}
This formula shows that hypothesis {\bf H3} is verified for Burgers type equations. 

Concerning the eigenvalues greater or equal to the second, there holds (see \cite[Corollary 4.5]{MS})
\begin{equation*}
\lambda_k^\varepsilon(\xi) \leq -\frac{c}{\varepsilon}, \qquad \forall \ k \geq 2,
\end{equation*}
showing that the spectral distribution required in hypothesis {\bf H2} is in this case satisfied. Finally, hypothesis {\bf H4} is a direct consequence of the negativity of the first eigenvalue.

\vskip0.2cm
The rigorous results are also validated numerically; the following table shows a numerical computation for the location of the shock layer  for different values of the parameter $\varepsilon$. The initial datum is the nondecreasing function $u_0(x)=\frac{1}{2}x^2-x-\frac{1}{2}$, and the equation is considered in the bounded interval $[-1,1]$, and complemented with Dirichlet boundary conditions $u(\pm 1)= \mp 1$. In this case, as already stressed, the single stable steady state is given by the hyperbolic tangent centered in zero  $U^\varepsilon(x;0)=-\kappa \tanh \left( \frac{\kappa x}{2\varepsilon}\right)$, where $\kappa=\kappa(\varepsilon, u_\pm)$. We can see that the convergence towards $\bar \xi =0$  is slower as $\varepsilon$ becomes smaller.

\begin{table}[h!]
{\footnotesize{The numerical location of the shock layer $\xi(t)$ for different values of the parameter $\varepsilon$}}
\begin{center}
\begin{tabular}{|c|c|c|c|c|c|}
\hline TIME $t$ &   $\xi(t)$, $\varepsilon=0.1$  &   $\xi(t)$, $\varepsilon=0.07$ &  $\xi(t)$, $\varepsilon=0.06$ &   $\xi(t)$, $\varepsilon=0.04$ 
\\ \hline
\hline $0.2$ Ê& $-0.3954$ &$-0.4010$  &$-0.4028$ &$-0.4065$ \\
\hline  $1$ & $-0.3233$ & $-0.3293$  &$-0.3306$ &$-0.3808$ \\
\hline $5*10^3$ & $-0.0044$ &$-0.2231$  &$-0.3032$ &$-0.3315$ \\
\hline $5*10^4$ & $-4.3033*10^{-12}$ & $-0.0942$ & $-0.2198$ & $-0.3314$\\
\hline $10^5$ & $-4.3033*10^{-12}$ &$-0.0528$ &$-0.1845$ &$-0.2531$\\
\hline $10^6$ & $-4.3033*10^{-12}$ &$-8.7386*10^{-6}$ &$-8.7386*10^{-6}$ &$-0.0379$ \\
\hline\end{tabular}
\end{center}

\end{table}

Figure \ref{fig2} shows the dynamics of the shock layer, obtained numerically. When $\varepsilon=0.1$, the shock layer location converges to zero very fast: as we can also see from the table, when $t=5*10^3$, the value of $\xi(t)$ is already very close to zero, corresponding to its equilibrium. On the other hand, when $\varepsilon$ becomes smaller the shock layer location moves slower and it approaches the equilibrium location only for very large $t$.

\begin{figure}
\centering
\includegraphics[width=7cm,height=7cm]{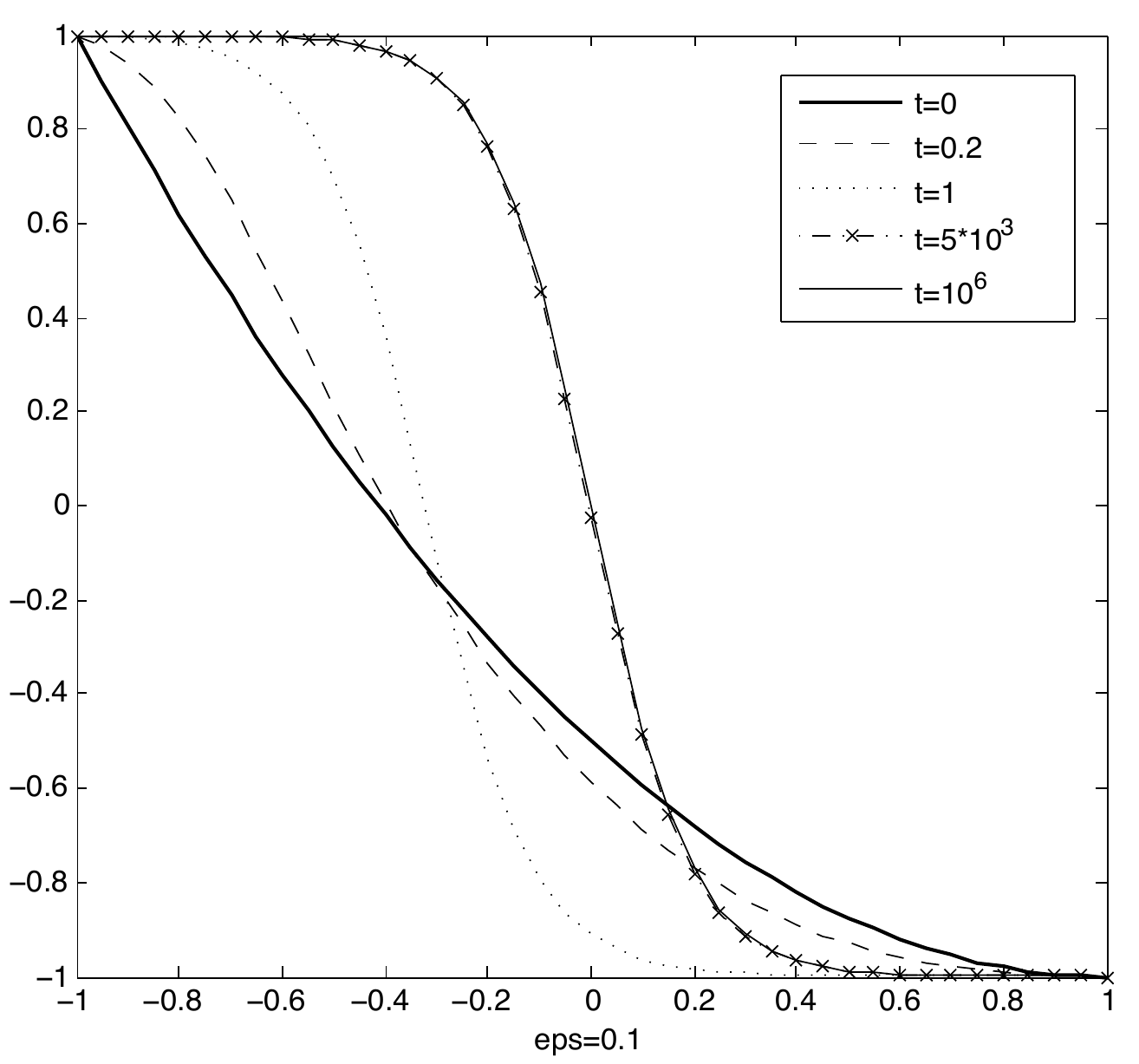}
 \hspace{3mm}
\includegraphics[width=7cm,height=7cm]{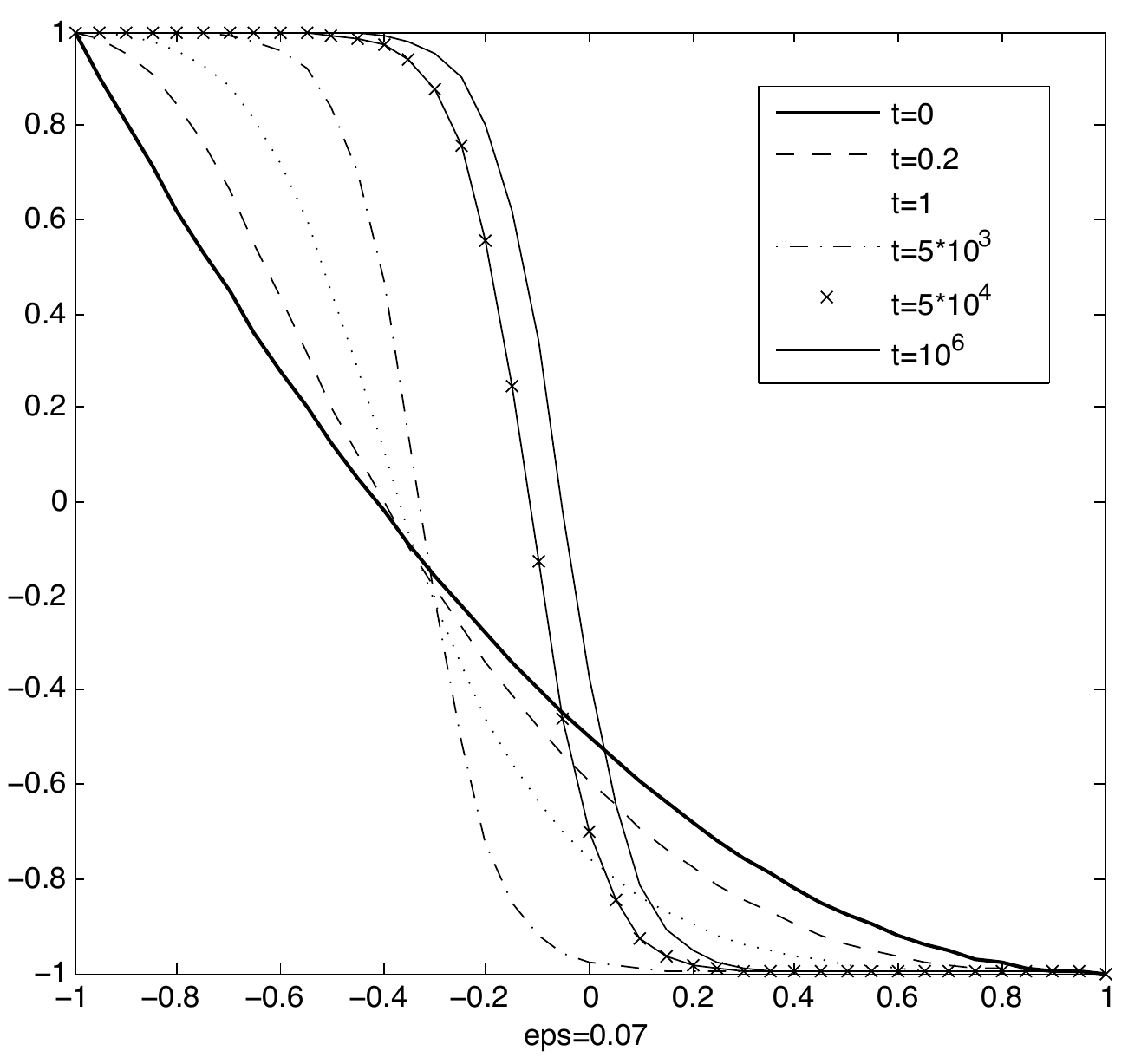}
 \hspace{3mm}
\includegraphics[width=7cm,height=7cm]{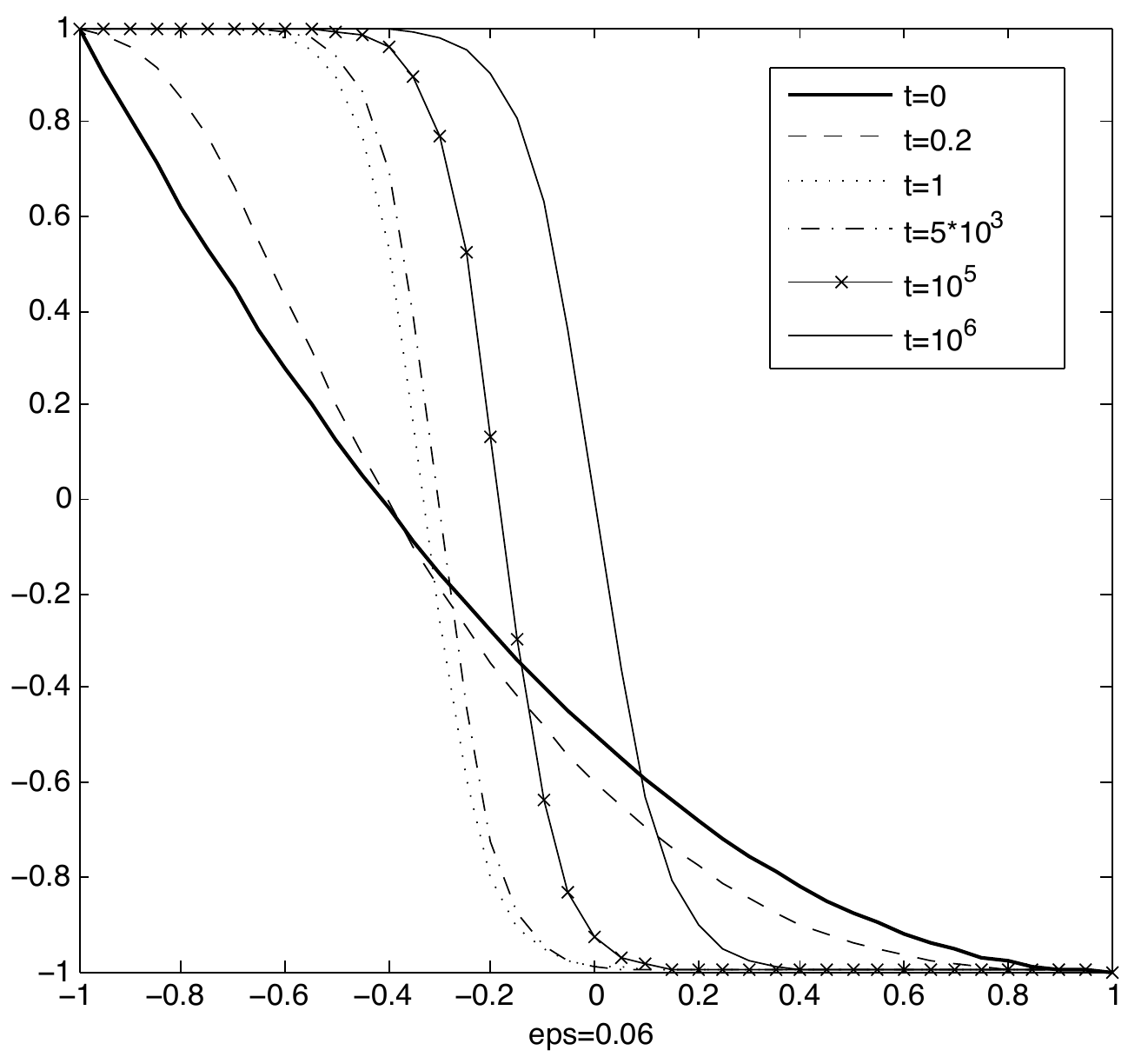}
 \hspace{3mm}
\includegraphics[width=7cm,height=7cm]{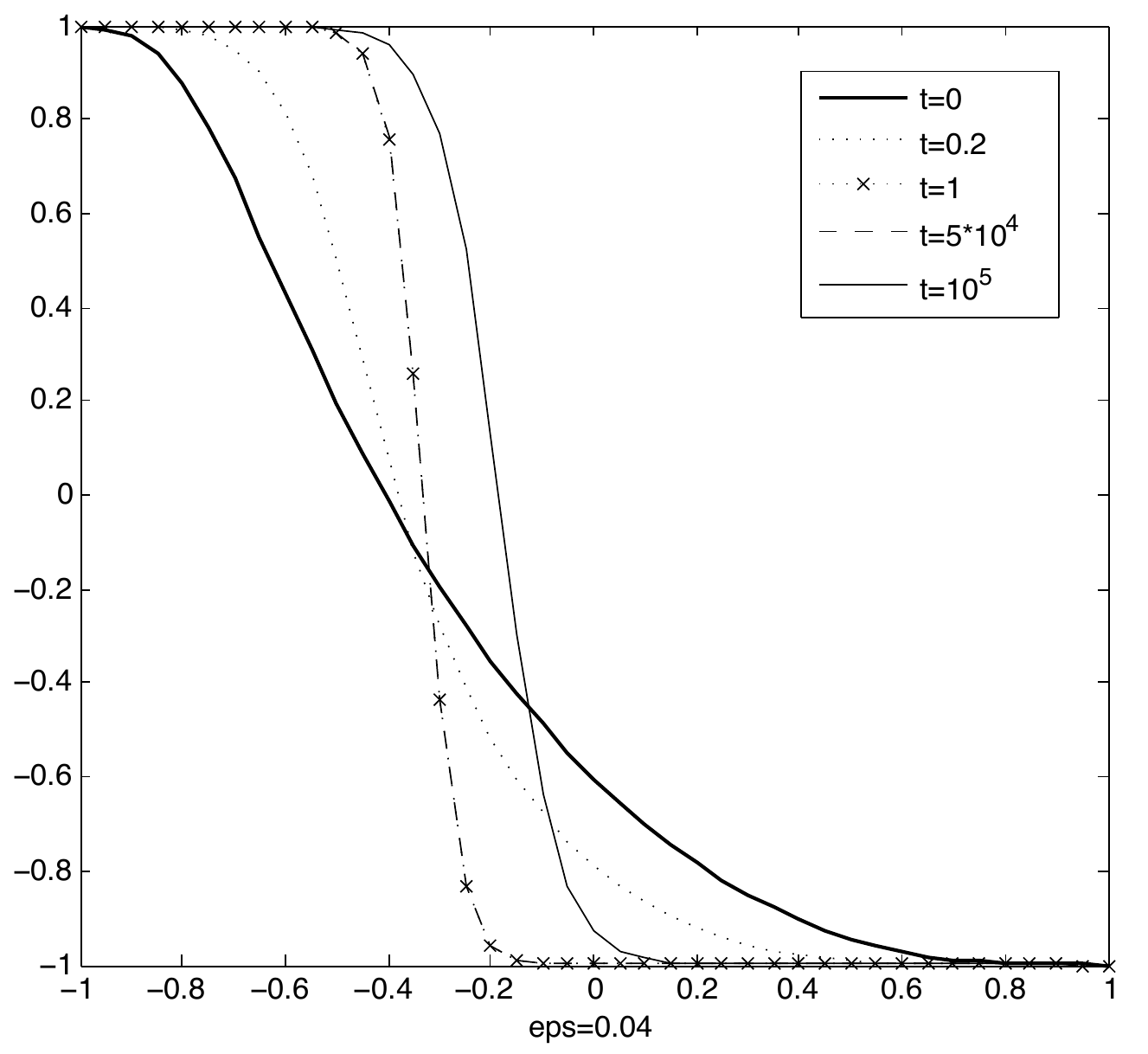}
\caption{\small{The shock layer profiles for the viscous Burgers equation for different times and different values of the parameter $\varepsilon$. }}\label{fig2}
\end{figure}

\subsection{The hyperbolic-parabolic Jin-Xin system}\label{JX}

We consider  the initial-boundary value problem for the quasilinear Jin-Xin system in the bounded interval $I=[-\ell,\ell]$  with Dirichlet boundary conditions, that is
\begin{equation}\label{JXbur}
 \left\{\begin{aligned}
& \partial_t u +\partial_x v=0,  &\qquad &x \in I, \ t \geq 0, \\
&\partial_t v+ a^2 \partial_x u=\frac{1}{\varepsilon} (f(u)-v), \\
& u(\pm \ell,t)=u_{\pm}, &\qquad &t \geq 0,\\
& u(x,0)=u_0(x), \quad v(x,0)=v_0(x) \equiv f(u_0(x)), &\qquad &x \in I,
  \end{aligned}\right.
\end{equation}
for some $\varepsilon, \ell,a>0$, and $u_{\pm} \in \R$. To simplify the computations, we consider the specific case of the quadratic flux function $f(u)=u^2/2$, and $a=1$.
\vskip0.2cm
The Jin-Xin model was firstly introduced in \cite{JinXin95} as a numerical scheme approximating the solutions of the hyperbolic conservation law $\partial_t u+ \partial_xf(u)=0$. In particular, the study of stationary solutions to \eqref{JXbur}  is exactly the same of that of the scalar conservation law \eqref{cauchyBurg1},
together with the additional condition $\partial_x v=0$. 

Hence, in order to build up the family of approximate steady states $\{ \boldsymbol{U}^{\varepsilon}(\cdot,\xi)\}= \{ (U^{\varepsilon}, V^\varepsilon)(\cdot,\xi)\}$, we proceed as follows: stationary solution to \eqref{JXbur} satisfies
\begin{equation*}
\varepsilon \partial_x u= \frac{u^2}{2}-\frac{c^2}{2}, \quad v=\frac{c^2}{2}, 
\end{equation*}
with boundary conditions $u(\pm \ell)=\mp u^*$, for some $u^*>0$. Hence, the family $U^\varepsilon$ can be constructed as in the previous section, via de formula \eqref{approxU}.  Moreover, by the condition $v=\frac{c^2}{2}$, we have
\begin{equation*}
V^\varepsilon(x;\xi)= \left\{\begin{aligned}
&\kappa^2_- /2 \quad \rm in \ (-\ell,\xi), \\
&\kappa^2_+/2 \quad \rm in \ (\xi,\ell).
  \end{aligned}\right.
\end{equation*}
In order to obtain an asymptotic expression for the term $\boldsymbol{\Omega}^\varepsilon(\xi)= (\Omega_1^\varepsilon(\xi), \Omega_2^\varepsilon(\xi))$, we compute
\begin{equation*}
\mathcal F^\varepsilon[\boldsymbol{U}^\varepsilon]:= \left( \begin{aligned} &-\partial_x V^\varepsilon \\ -\partial_x U^\varepsilon+&\frac{1}{\varepsilon}\left\{ (U^{\varepsilon})^2/2\!-\!V^\varepsilon\right\} \end{aligned}\right).
\end{equation*}
From the explicit formula for $U^\varepsilon(x;\xi)$ given in \eqref{approxU}, and since  $-\partial_x V^\varepsilon=\varepsilon \partial_x^2U^\varepsilon-U^\varepsilon \partial_x U^\varepsilon$, by direct substitution we obtain the identity
\begin{equation*}
\mathcal F^\varepsilon[\boldsymbol{U}^\varepsilon]= (0, [\![\partial_xU^\varepsilon]\!]_{x=\xi}\delta_{x=\xi})^{t}.
\end{equation*}
Recalling the definition of $\boldsymbol{\Omega}^\varepsilon$ we have
\begin{equation*}
\boldsymbol{\Omega}^\varepsilon(\xi) \sim (0, \frac{2 u^2_*}{\varepsilon}(e^{-u_*(\ell+\xi)/\varepsilon}-e^{-u_*(\ell-\xi)/\varepsilon})^t,
\end{equation*}
showing that hypothesis {\bf H1} is satisfied in the case of the Jiin-Xin system. For more details on the computations, see \cite[Example 2.2]{Str12}.

Concerning the spectral analysis, we will makes use of the analogies between this problem and the viscous scalar conservation law \eqref{cauchyBurg1}.  By linearizing the system \eqref{JXbur} around $\{ (U^\varepsilon, V^\varepsilon) \}$, the eigenvalue problem  for  the linearized operator $\mathcal L^\varepsilon_\xi$ reads
\begin{equation*}
\left \{ \begin{aligned}
\lambda \varphi &=-\partial_x \psi,\\
\lambda \psi &= - \partial_x \varphi+ \frac{1}{\varepsilon} (U^{\varepsilon} \varphi-\psi).
\end{aligned} \right.
\end{equation*}
By differentiating the second equation with respect to $x$, we obtain
\begin{equation*}
\varepsilon \partial_x^2 \varphi-\partial_x(U^\varepsilon \varphi)=\lambda(1+\varepsilon \lambda)\varphi. 
\end{equation*}
Hence, $\lambda$ is an eigenvalue for $\mathcal L^\varepsilon_\xi$ if and only if $\lambda^{vsc}:=\lambda(1+\varepsilon \lambda)$ is an eigenvalue for the operator $\mathcal L^{\varepsilon,vsc}$ defined as
\begin{equation*}
\mathcal L^{\varepsilon,vsc} \varphi := \varepsilon  \partial_x^2 \varphi-\partial_x (U^\varepsilon \varphi).\end{equation*}
Thus, by using the spectral analysis performed in section \ref{Burg} for the linear operator $\mathcal L^{\varepsilon,vsc}$, one can prove the following result (for more details, see \cite[Proposition 3.4]{Str12}); for the first eigenvalue of the linearized operator $\mathcal L^\varepsilon_\xi$ there holds $\lambda_1^{JX}(\xi)$ negative for all $\xi$ and 
\begin{equation}\label{lambdaasybur}
|\lambda_{1}^{JX}(\xi)| \sim \frac{ \frac{{u^*}^2}{\varepsilon}\left[  e^{-u^*\varepsilon^{-1}(\ell-\xi)}+e^{-u^* \varepsilon^{-1}(\ell+\xi)}\right]}{1+ \sqrt{1-2  {u^*}^2 \left[  e^{-u^*\varepsilon^{-1}(\ell-\xi)}+e^{-u^*\varepsilon^{-1}(\ell+\xi)}\right]}}.
\end{equation}
Moreover, all the other eigenvalues are bounded away from zero and such that
\begin{equation*}
\Re e \, \lambda_k^\varepsilon(\xi) \leq -\frac{c}{\varepsilon}, \qquad \forall \ k \geq 2.
\end{equation*}
Thanks to these results, and by comparing \eqref{lambdaasybur} with the expression obtained for $\boldsymbol{\Omega}^\varepsilon$, we can easily check that hypotheses {\bf H2-3-4} are satisfied for the Jin-Xin system.

\vskip0.2cm
Again, we can give evidence of the rigorous theory by numerically compute the solution to \eqref{JXbur}: Figure 4  shows that a shock layer is formed from initial data in a $\mathcal O(1)$ time scale.  Once this interface is formed, it moves towards the equilibrium solution, and this motion is exponentially slow. Concerning the function $v$, starting with the initial datum $v_0(x)=f(u_0(x))$, we can observe that the position of the shock of $u$ corresponds to the location of the minimum value of the function $v$.

\begin{figure}[ht]
\centering
\includegraphics[width=12cm,height=10cm]{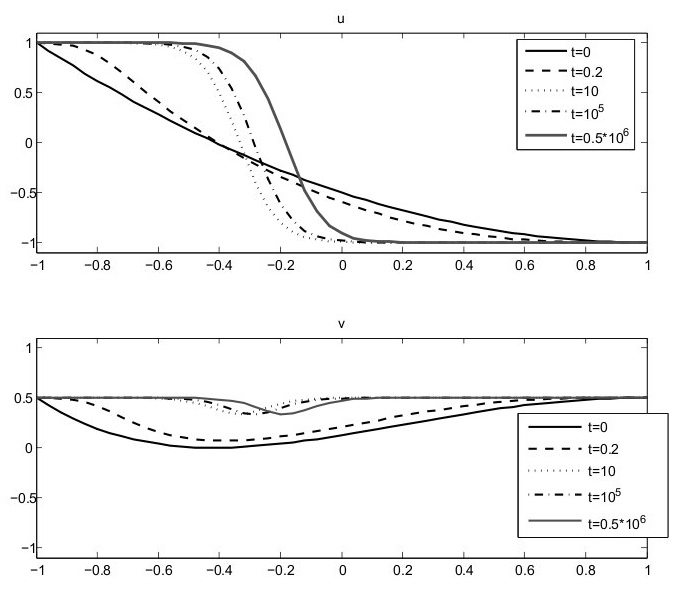}

\caption{\small{Profiles of $(u,v)$, solutions to \eqref{JXbur}, with $f(u)=u^2/2$, a =1 $\varepsilon=0.04$ and $u_{\pm}=\mp1$. The initial datum is given by the couple $(u_0(x),f(u_0(x)))$, with $u_0(x)$ a decreasing function connecting $u_+$ and $u_-$.  }}\label{fig1}
\end{figure}

The following table shows a numerical computation for the location of the shock layer (corresponding to the zero value of the function $u$) for different values of the parameter $\varepsilon$. The convergence towards the equilibrium is slower as $\varepsilon$ becomes smaller; for example, when $\varepsilon= 0.02$, the interface is almost still.

\begin{table}[h!]
\footnotesize{The numerical location of the shock layer $\xi(t)$ for different values of the parameter $\varepsilon$}
\begin{center}
\begin{tabular}{|c|c|c|c|c|c|}
\hline TIME $t$ &   $\xi(t)$, $\varepsilon=0.1$  &   $\xi(t)$, $\varepsilon=0.07$ &  $\xi(t)$, $\varepsilon=0.055$ &   $\xi(t)$, $\varepsilon=0.04$ &  $\xi(t)$, $\varepsilon=0.02$
\\ \hline
\hline $0.2$ Ê& $-0.4008$ &$-0.4020$  &$-0.4029$ &$-0.4040$ &$-0.4059$\\
\hline  $1$ & $-0.3314$ & $-0.3345$  &$-0.3360$ &$-0.3374$ & $-0.3389$\\
\hline $10$ & $-0.3070$ &$-0.3263$ &$-0.3304$  &$-0.3320$ & $-0.3326$\\
\hline $10^3$ & $-0.0103$ &$-0.1600$  &$-0.2562$ &$-0.3181$ & $-0.3325$\\
\hline $10^4$ & $-1.9725*10^{-12}$ &$-0.0084$ &$-0.1115$ &$-0.2531$& $-0.3320$\\
\hline $0.5*10^6$ & $-1.9725*10^{-12}$ &$-2.2102*10^{-11}$ &$-1.5057*10^{-10}$ &$-0.0379$ & $-0.3099$\\
\hline\end{tabular}
\end{center}

\end{table}

\begin{remark}{\rm
The approach presented in the previous sections  is a general approach that can be used for a broad class of parabolic system, providing to be able to perform a spectral analysis of the equation under consideration. In principle, this analysis may be done also numerically.  We quote the papers \cite{Str13,Str14}, where the  Burgers-Sivashinsky and the Allen-Cahn equations are studied by using this technique, with the appropriate changes due to the specific form of the equations under consideration. }
\end{remark}

\section{Nonlinear metastability for parabolic systems of conservation laws}

The aim of this Section is to study the behavior of the solution $(\xi,v)$ to the complete system 
\begin{equation}\label{CS}
 	\left\{\begin{aligned}
	\frac{d\xi}{dt}&=\theta^\varepsilon(\xi)\bigl(1 
		+\langle\partial_{\xi} \psi^\varepsilon_1, v \rangle\bigr) + \rho^\varepsilon[\xi,v], \\
	\partial_t v &= H^\varepsilon(\xi)+ ({\mathcal L}^\varepsilon_\xi+{\mathcal M}^\varepsilon_\xi)v +\mathcal{R}^\varepsilon[v,\xi],
 	\end{aligned}\right. 
\end{equation}
where also the higher order terms arising from the linearization around $v \sim 0$ are taken into account. As already stressed in the Introduction, dealing with the nonlinear terms in the perturbation $v$ brings into the analysis the specific form of the quadratic terms: in some cases, like the one of parabolic systems of reaction-diffusion equations, these terms do not depend on the space derivatives of the solutions, and a result analogous to Theorem \ref{LT} can be proved without additional assumptions (see \cite{Str14}).
\vskip0.2cm
Differently, when a nonlinear first-order space derivative term is present, as is the case of viscous conservation laws, the quadratic term involves a dependence on the space derivative of the solution, and an additional bound for the $H^1-$norm is needed.

Here we reduce our analysis to the specific case of systems of viscous scalar conservation laws; a straightforward computation shows that the nonlinear term $\mathcal Q^\varepsilon[\xi,v]$ is  given by $ \mathcal Q^\varepsilon:=v\partial_xv $. Precisely, $|\mathcal Q^\varepsilon| \leq C |v|^2_{{}_{H^1}}$, and this estimate can be used to prove the following result.

\begin{theorem}\label{NT}
 Let us denote by $(\xi,v)$ the solution to the initial-value problem \eqref{CS} with
\begin{equation*}
\xi(0)=\xi_0  \in J \quad {\rm and} \quad v(x,0)=v_0(x) \in  H^1(I),
\end{equation*}
Let  hypotheses {\bf H1-2-3} be satisfied and let us also assume that the eigenfunctions $\varphi^\varepsilon_k(\cdot;\xi)$ and $\psi^\varepsilon_k(\cdot;\xi)$ of $\mathcal{L}^\varepsilon_{\xi}$ and $\mathcal{L}^{\varepsilon,\ast}_{\xi}$ are such that
\begin{equation}\label{derpsiphi}
	\sum_{j} \langle \partial_\xi \psi^\varepsilon_k, \varphi^\varepsilon_j\rangle^2
	=\sum_{j} \langle \psi^\varepsilon_k, \partial_\xi \varphi^\varepsilon_j\rangle^2
	\leq C
	\qquad\qquad\forall\,k,
\end{equation}
for some constant $C$ independent on $\varepsilon>0$ and $\xi\in J$.

Then,  for $|v_0|_{{}_{H^1}}$ and $\varepsilon$ sufficiently small,  the solution $v$ can be decomposed as
$
v=z+R,
$
where $z$ is defined by
\begin{equation*}
	z(x,t):=\sum_{k\geq 2} v_k(0)\exp\left(\int_0^t \lambda^\varepsilon_k(\xi(\sigma))\,d\sigma\right)
		\,\varphi^\varepsilon_k(x;\xi(t)),
\end{equation*}
and the remainder $R$ satisfies the estimate
\begin{equation}\label{boundresto}
	|R|_{{}_{H^1}}\,\leq C\,
		\left\{ \varepsilon^\delta \, \exp\left( \int_0^t\lambda_1^\varepsilon(\xi(\sigma))d\sigma \right)|v_0|^2_{{}_{H^1}}+\varepsilon^{1-\delta}\right\}
\end{equation}
for some constant $C>0$ and for some $\delta \in (0,1)$. 
\end{theorem}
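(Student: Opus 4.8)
The plan is to adapt the spectral-decomposition scheme behind \cite[Theorem 2.1]{MS} to the full system \eqref{CS}, the genuinely new point being the treatment of the quadratic term $\mathcal{Q}^\varepsilon=v\,\partial_x v$ in the $H^1$ norm. First I would expand $v$ along the biorthogonal eigensystem of $\mathcal{L}^\varepsilon_{\xi(t)}$: the constraint \eqref{v1equal0} forces $v_1\equiv0$, so $v(\cdot,t)=\sum_{k\geq2}v_k(t)\,\varphi^\varepsilon_k(\cdot;\xi(t))$ with $v_k=\langle\psi^\varepsilon_k,v\rangle$. Differentiating in $t$, using $\partial_t\psi^\varepsilon_k=\dot\xi\,\partial_\xi\psi^\varepsilon_k$, equation \eqref{eqvNL} and $\langle\psi^\varepsilon_k,\mathcal{L}^\varepsilon_\xi v\rangle=\lambda^\varepsilon_k(\xi(t))\,v_k$, one gets the scalar ODEs
\begin{equation*}
\begin{aligned}
\dot v_k&=\lambda^\varepsilon_k(\xi(t))\,v_k+F_k(t),\\
F_k&:=\langle\psi^\varepsilon_k,H^\varepsilon\rangle+\langle\psi^\varepsilon_k,\mathcal{M}^\varepsilon_\xi v\rangle+\langle\psi^\varepsilon_k,\mathcal{R}^\varepsilon[v,\xi]\rangle+\dot\xi\,\langle\partial_\xi\psi^\varepsilon_k,v\rangle .
\end{aligned}
\end{equation*}
Variation of constants gives $v_k(t)=v_k(0)\,E_k(t)+\int_0^t (E_k(t)/E_k(r))\,F_k(r)\,dr$ with $E_k(t)=\exp(\int_0^t\lambda^\varepsilon_k(\xi(\sigma))\,d\sigma)$; summing the first term over $k\geq2$ is exactly the function $z$ of the statement, and the remainder is $R=v-z=\sum_{k\geq2}R_k(t)\,\varphi^\varepsilon_k(\cdot;\xi(t))$, $R_k=\int_0^t (E_k(t)/E_k(r))\,F_k(r)\,dr$.

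The second step is to bound $F_k$. The terms $\langle\psi^\varepsilon_k,H^\varepsilon\rangle$ and $\langle\psi^\varepsilon_k,\mathcal{M}^\varepsilon_\xi v\rangle$ are source terms of size $\mathcal{O}(|\Omega^\varepsilon|_{L^\infty})$ up to bounded $\xi$-dependent factors, handled exactly as in the proof of Theorem \ref{LT} via \eqref{asyMH}--\eqref{asyMH2}; the same bounds together with \eqref{eqxiNL} and {\bf H1} give $|\dot\xi|\leq C|\Omega^\varepsilon|_{L^\infty}$, so the cross term $\dot\xi\,\langle\partial_\xi\psi^\varepsilon_k,v\rangle$ is $\mathcal{O}(|\Omega^\varepsilon|_{L^\infty}\,|v|_{L^2})$ once \eqref{derpsiphi} and Cauchy--Schwarz are applied to $\langle\partial_\xi\psi^\varepsilon_k,v\rangle=\sum_j v_j\,\langle\partial_\xi\psi^\varepsilon_k,\varphi^\varepsilon_j\rangle$. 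The new contribution is $\langle\psi^\varepsilon_k,\mathcal{R}^\varepsilon\rangle$: writing $\mathcal{Q}^\varepsilon=\tfrac12\partial_x(v^2)$ and integrating by parts, $\langle\psi^\varepsilon_k,\mathcal{Q}^\varepsilon\rangle=-\tfrac12\langle\partial_x\psi^\varepsilon_k,v^2\rangle$, whence $|\langle\psi^\varepsilon_k,\mathcal{R}^\varepsilon\rangle|\leq C(1+|\partial_x\psi^\varepsilon_k|_{L^2})\,|v|^2_{H^1}$ after also using $|\rho^\varepsilon|\leq C|v|^2_{H^1}$; this is the step that forces the $H^1$ norm of $v$ into the analysis.

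The core of the argument is the $H^1$-estimate of $R$. Writing $\partial_x R=\sum_{k\geq2}R_k\,\partial_x\varphi^\varepsilon_k$ (equivalently, estimating $R$ through the evolution operator of $\partial_t R=\mathcal{L}^\varepsilon_\xi R+\dots$ on the spectral subspace $\{v_1=0\}$ and parabolic regularization of $\partial_x(v^2)$), one uses the spectral gap {\bf H2}, which gives $|E_k(t)/E_k(r)|\leq (E_1(t)/E_1(r))\,e^{-c_1(t-r)}$ --- this both makes the $r$-integral in $R_k$ converge and, via $\Re e\,\lambda^\varepsilon_k<-c_2$, produces a gain $|\lambda^\varepsilon_k|^{-1}$ --- while hypothesis \eqref{derpsiphi} (reinstated here, unlike in Theorem \ref{LT}) makes the resulting series over $k$ summable. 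Feeding in the bound for $F_k$, the quadratic part of $R$ is controlled by a quantity of the form $\varepsilon^{-a}\sup_{[0,t]}|v|^2_{H^1}$, with $a>0$ coming from the $\varepsilon$-scale oscillations of the high modes $\varphi^\varepsilon_k,\psi^\varepsilon_k$; one then writes $|v|^2_{H^1}\leq 2|z|^2_{H^1}+2|R|^2_{H^1}$, bounds $|z|_{H^1}\leq C|v_0|_{H^1}\exp(\int_0^t\lambda^\varepsilon_1(\xi(\sigma))\,d\sigma)$ (the slow envelope inherited from $E_1$), and applies a Young-type split with exponent $\delta$ that absorbs $\varepsilon^{-a}|R|^2_{H^1}$ into the left-hand side, leaving precisely $\varepsilon^{\delta}\exp(\int_0^t\lambda^\varepsilon_1)\,|v_0|^2_{H^1}$ together with a residual $\varepsilon^{1-\delta}$ (which also swallows the $\mathcal{O}(|\Omega^\varepsilon|_{L^\infty})$ source terms, far smaller). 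Everything is made rigorous by a continuation argument: one runs the estimates on the maximal interval on which $|v(\cdot,t)|_{H^1}$ stays below a threshold that is polynomially small in $\varepsilon$ --- which is what ``$|v_0|_{H^1}$ and $\varepsilon$ sufficiently small'' means --- and the bound just obtained improves the threshold strictly, forcing the interval to be $[0,+\infty)$; the $\xi$-equation is carried along via $|\dot\xi|\leq C|\Omega^\varepsilon|_{L^\infty}$.

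I expect the hard part to be exactly this $\varepsilon$-power bookkeeping around $v\,\partial_x v$. The eigenfunctions $\varphi^\varepsilon_k,\psi^\varepsilon_k$ with $k\geq2$ oscillate on the $\varepsilon$-scale, so $|\partial_x\varphi^\varepsilon_k|_{L^2}$ and $|\partial_x\psi^\varepsilon_k|_{L^2}$ grow both in $k$ and in $\varepsilon^{-1}$, and one has to check that the decay factor from the spectral gap and the $H^1$-normalization of the spectral data of $v_0$ combine to leave a net controllable power of $\varepsilon$; this is precisely why the remainder here is only polynomially small (the $\varepsilon^\delta$, $\varepsilon^{1-\delta}$) rather than exponentially small as in Theorem \ref{LT}, and why hypothesis \eqref{derpsiphi}, which the stable-families argument could dispense with in the quasi-linear case, must be assumed again here.
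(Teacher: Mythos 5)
Your overall architecture --- biorthogonal expansion of $v$ with $v_1\equiv 0$ from \eqref{v1equal0}, Duhamel formula for the coefficients $v_k$, identification of $z$ with the sum of the homogeneous terms, and a quadratic inequality for the remainder closed by absorption in the regime of small $\varepsilon$ and $|v_0|_{{}_{H^1}}$ --- coincides with the paper's. The divergence, and the gap, is at the one step that actually distinguishes Theorem \ref{NT} from the quasi-linear case: how the $x$-derivative of $R$ is controlled. You propose to differentiate the spectral series term by term, $\partial_x R=\sum_{k\geq 2}R_k\,\partial_x\varphi^\varepsilon_k$, and to sum the resulting series using the spectral gap. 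This requires quantitative bounds on $|\partial_x\varphi^\varepsilon_k|_{{}_{L^2}}$ (and on $|\partial_x\psi^\varepsilon_k|_{{}_{L^2}}$, which you also invoke when integrating $\langle\psi^\varepsilon_k,\tfrac12\partial_x(v^2)\rangle$ by parts), uniformly in $\xi$ and with controlled growth in $k$ and $\varepsilon^{-1}$, together with summability of the differentiated series. None of this is assumed --- hypothesis \eqref{derpsiphi} only controls $\xi$-derivatives paired against eigenfunctions --- and you do not establish it; you explicitly defer it (``one has to check that the decay factor \dots combine[s] to leave a net controllable power of $\varepsilon$''). Since the entire added content of the theorem relative to Theorem \ref{LT} is the $H^1$ bound, this deferred verification is the proof, not a detail.

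The paper takes a route that avoids eigenfunction-derivative bounds altogether: it differentiates the \emph{equation}, setting $y=\partial_x v$, writes the evolution of $y$ as driven again by $\mathcal L^\varepsilon_\xi$ plus new sources ($\partial_x H^\varepsilon$, a term $\partial_x(\partial_x U^\varepsilon\,v)$, $\partial_x\mathcal R^\varepsilon$, and a modified $\bar{\mathcal M}^\varepsilon_\xi$), and expands $y$ itself in the basis $\{\varphi^\varepsilon_j\}$, so the same $E_k(s,t)$ machinery applies to the coefficients $y_k$. The term $\langle\psi^\varepsilon_k,\partial_x(\partial_x U^\varepsilon\,v)\rangle$ is then split by Young's inequality as $\varepsilon^m|U^\varepsilon|^2_{{}_{L^\infty}}+\varepsilon^{-m}|v|^2_{{}_{H^1}}$, and it is exactly this split --- not the $\mathcal O(|\Omega^\varepsilon|_{{}_{L^\infty}})$ sources, as you suggest --- that produces the two exponents in \eqref{boundresto}: the surviving $\varepsilon^m|U^\varepsilon|^2_{{}_{L^\infty}}$ gives the $\varepsilon^{1-\delta}$ residual, while the $\varepsilon^{-m}$ loss in front of $|v|^2_{{}_{H^1}}$, combined with $|\Lambda^\varepsilon_2|^{-1}\sim\varepsilon$ from the spectral gap, gives the $\varepsilon^{\delta}|v_0|^2_{{}_{H^1}}$ term after setting $m=1-\delta$. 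To complete your version you would have to either prove the missing bounds on $\partial_x\varphi^\varepsilon_k$ and $\partial_x\psi^\varepsilon_k$ for the operators at hand, or switch to differentiating the equation as above.
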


We point out that we imposed again the additional hypothesis \eqref{derpsiphi}. This is probably related to the specific strategy we use at such stage, but it is needed in order to provide an estimate for the $H^1$- norm of the perturbation $v$, making the theory more complete.

Also, estimate \eqref{boundresto} is weaker that the corresponding estimate \eqref{stimafinaleY} obtained for the reduced system in Section \ref{Secmeta:line}, since it states that the remainder $R$ tends to $0$ as $\varepsilon^\delta$ instead of $|\Omega^\e|_{{}_{\infty}}$. Such deterioration  is a consequence of the necessity of estimating also the first order derivative.

\begin{proof}
Since the plan of the proof closely resembles the one used in \cite[Theorem 2.1]{MS}, we
propose here only the major modifications of the argument. Especially, the key point is how to deal with the nonlinear higher order terms.

Setting 
\begin{equation*}
	v(x,t)=\sum_{j} v_j(t)\,\varphi^\varepsilon_j(x,\xi(t)),
\end{equation*}
we obtain an infinite-dimensional differential system for the coefficients $v_j$
\begin{equation}\label{eqwk_bis}
	\frac{dv_k}{dt}=\lambda^\varepsilon_k(\xi)\,v_k
		+\langle \psi^\varepsilon_k,F\rangle+ +\langle \psi^\varepsilon_k,G\rangle,
\end{equation}
where $F$ is defined as 
\begin{equation*}
	F:=H^\varepsilon-\theta^\varepsilon \sum_{j}\Bigl(a_j+\sum_{\ell} b_{j\ell}\,v_\ell\Bigr)v_j,
\end{equation*}
with
\begin{equation*}
	a_j:=\langle \partial_{\xi} \psi^\varepsilon_1, \varphi^\varepsilon_j\rangle\,
			\partial_{\xi}U^{\varepsilon}+\partial_\xi \varphi^\varepsilon_j,
	\qquad
	b_{j\ell}:=\langle \partial_{\xi} \psi^\varepsilon_1, \varphi^\varepsilon_\ell\rangle
			\,\partial_\xi \varphi^\varepsilon_j.
\end{equation*}
The term $G$ comes out from the higher order terms $\rho^\varepsilon$ and $\mathcal R^\varepsilon$ and has the following expression
\begin{equation*}
G:= \mathcal Q^\varepsilon- \left(  \sum_j  \partial_\xi \varphi_j^\varepsilon v_j+\partial_\xi U^\varepsilon\right) \left\{ \frac{\langle \psi^\varepsilon_1,\mathcal Q^\varepsilon \rangle}{1-\langle\partial_\xi \psi^\varepsilon_1,v \rangle}-\theta^\varepsilon\frac{\langle \partial_\xi \psi^\varepsilon_1,v \rangle^2}{1-\langle\partial_\xi \psi^\varepsilon_1,v \rangle} \right\}.
\end{equation*}
Moreover, we have
\begin{equation*}
|\langle \psi^\varepsilon_k, G \rangle| \leq (1+|\Omega^\varepsilon|_{{}_{L^\infty}}) |v|^2_{{}_{L^2}}+ C |v|^2_{{}_{H^1}},
\end{equation*}
so that, setting
\begin{equation*}
	E_k(s,t):=\exp\left( \int_s^t \lambda_k^\varepsilon(\xi(\sigma))d\sigma\right)
\end{equation*}
and since $v_1=0$, we have the following expression for the coefficients $v_k$, $k\geq 2$
\begin{equation*}
v_k(t)= v_k(0) E_k(0,t)+ \int_0^t \left\{ \right \langle \psi^\varepsilon_k,F\rangle+ +\langle \psi^\varepsilon_k,G\rangle \} E_k(s,t) \, ds.
\end{equation*}
By introducing the function
\begin{equation*}
	z(x,t):=\sum_{k\geq 2} v_k(0)\,E_k(0,t)\,\varphi^\varepsilon_k(x;\xi(t)),
\end{equation*}
we end up with the following estimate for the $L^2$-norm of the difference $v-z$
\begin{equation*}
|v-z|_{{}_{L^2}} \leq \sum_{k \geq 2} \int_0^t \left( \Omega^\e(\xi)(1+|v|^2_{{}_{L^2}}) + |v|^2_{{}_{H^1}}\right) E_k(s,t) \, ds,
\end{equation*}
that is
\begin{equation*}
\begin{aligned}
|v-z|_{{}_{L^2}} \leq& \ C \int_0^t \Omega^\varepsilon(\xi) (t-s)^{-1/2} E_2(s,t)  ds \\
&+ \int_0^t \left( |v-z|^2_{{}_{H^1}}+ |z|^2_{{}_{H^1}}\right) (t-s)^{-1/2} E_2(s,t)  ds,
\end{aligned}
\end{equation*}
where we used
 \begin{equation*}
	\sum_{k\geq 2} E_k(s,t)
		\leq C\,(t-s)^{-1/2}\,E_2(s,t).
\end{equation*}
Now we need to differentiate with respect to $x$ the equation for $v$ in order to obtain an estimate for $|\partial_x(v-z)|_{{}_{L^2}}$. By setting $y=\partial_x v$, we obtain
\begin{equation*}
\partial_t y= \mathcal L^\varepsilon_\xi y + \bar{\mathcal  M}^\varepsilon_\xi v-\partial_x\left( \partial_xU^\varepsilon \, v\right) + \partial_xH^\varepsilon(x,\xi)+\partial_x \mathcal R^\varepsilon[\xi,v],
\end{equation*}
where
\begin{equation*}
\bar{\mathcal M}^\varepsilon_\xi v:=-\partial_{\xi x}U^{\varepsilon}(\cdot;\xi)
			\,\theta^\varepsilon(\xi)\,\langle\partial_{\xi} \psi^\varepsilon_1, v \rangle.
\end{equation*}
Hence, by setting as usual
\begin{equation*}
y(x,t)=\sum_j y_j(t) \, \varphi_j^\varepsilon (x,\xi(t)),
\end{equation*}
we have
\begin{equation*}
\frac{dy_k}{dt}=\lambda^\varepsilon_k(\xi)\,y_k
		+\langle \psi^\varepsilon_k,F^*\rangle-\langle \psi^\varepsilon_k ,\partial_x\left( \partial_x U^\varepsilon\,v\right)\rangle +\langle \psi^\varepsilon_k,\partial_x \mathcal R^\varepsilon \rangle,
\end{equation*}
where
\begin{equation*}
F^*:= \partial_x H^\varepsilon -\sum_j v_j \left \{ \theta^\varepsilon\left[ \partial_{\xi x} U^\varepsilon \langle \partial_\xi \psi^\varepsilon_1,\varphi_j^\varepsilon \rangle + \partial_\xi \varphi^\varepsilon_j\left( 1 + \sum_{\ell}v_{\ell} \langle \partial_\xi \psi^\varepsilon_1,\varphi_{\ell} \rangle \right)   \right] -\partial_\xi \varphi^\varepsilon_j \rho^\varepsilon\right \}.
\end{equation*}
Moreover, for some $m >0$, there hold
\begin{equation*}
|\langle \psi^\varepsilon_k, \partial_x \mathcal R^\varepsilon \rangle| \leq C |v|^2_{{}_{H^1}}, \quad |\langle \psi^\varepsilon_k ,\partial_x\left( \partial_x U^\varepsilon \, v\right)\rangle| \leq \varepsilon^m\, |U^\varepsilon|_{{}_{L^\infty}}^2+ \frac{1}{\varepsilon^m}\,|v|^2_{{}_{H^1}}.
\end{equation*}
Again, by integrating in time and by summing on $k$, we end up with
\begin{equation*}
\begin{aligned}
|y-\partial_x z|_{{}_{L^2}} \leq&\, C \int_0^t \left \{ \Omega^\varepsilon(\xi) (1+ |v|^2_{{}_{L^2}}) +\left(1+\frac{1}{\varepsilon^m}\right)\, |v|^2_{{}_{H^1}} + \varepsilon^m \, |U^\varepsilon|^2_{{}_{L^\infty}} \right \} \, E_1(s,t) ds \\
&+C \int_0^t \left \{ \Omega^\varepsilon(\xi) (1+ |v|^2_{{}_{L^2}}) +\left(1+\frac{1}{\varepsilon^m}\right)\, |v|^2_{{}_{H^1}} + \varepsilon^m \, |U^\varepsilon|^2_{{}_{L^\infty}} \right \} \, \sum_{k \geq 2} E_k(s,t) ds.
\end{aligned}
\end{equation*}
Now, given $n>0$, let us set
\begin{equation*}
	N(t):= \frac{1}{\varepsilon^n} \sup_{s\in[0,t]} |v-z|_{{}_{H^1}}\,E_1(s,0),
\end{equation*} 
so that we have
\begin{equation}\label{est1}
\begin{aligned}
\frac{1}{\varepsilon^n}E_1(t,0)|v-z|_{{}_{L^2}} \leq& \ C \int_0^t \frac{\Omega^\varepsilon(\xi)}{\varepsilon^n} (t-s)^{-1/2} E_2(s,t) E_1(s,0) ds \\
&+ \int_0^t \frac{1}{\varepsilon^n}\left( |v-z|^2_{{}_{H^1}}+ |z|^2_{{}_{H^1}}\right) (t-s)^{-1/2} E_2(s,t) E_1(s,0) ds
\end{aligned}
\end{equation}
and
\begin{equation}\label{est2}
\begin{aligned}
&\frac{1}{\varepsilon^n}E_1(t,0)|y-\partial_x z|_{{}_{L^2}} \leq  
C \int_0^t \frac{\Omega^\varepsilon(\xi)}{\varepsilon^n}  \left \{E_1(s,0)+(t-s)^{-1/2} E_s(s,t)\, E_1(s,0)\right \} \,  ds\\
& +C \int_0^t \left \{ \left(\frac{1}{\e^n}+\frac{1}{\e^{n+m} } \right)\,\left( |v-z|^2_{{}_{H^1}} + |z|^2_{{}_{H^1}}\right)+\frac{1}{\varepsilon^{n-m}} \, |U^\varepsilon|^2_{{}_{L^\infty}} \right \} \, E_1(s,0) ds \\
& +C \int_0^t\left \{ \left(\frac{1}{\e^n}+\frac{1}{\e^{n+m}} \right)\!\!\left( |v-z|^2_{{}_{H^1}} \!+\! |z|^2_{{}_{H^1}}\right)\!+\!\frac{1}{\e^{n-m}} \, |U^\varepsilon|^2_{{}_{L^\infty}} \right \} (t-s)^{-1/2} E_s(s,t) E_1(s,0) ds. \\
\end{aligned}
\end{equation}
By summing \eqref{est1} and \eqref{est2} and since there hold
\begin{align*}
	&\int_0^t e^{(2\Lambda^\varepsilon_2-\Lambda_1^\varepsilon)s}\,ds
		\leq \int_0^t e^{\Lambda^\varepsilon_2 s}\,ds
		=\frac{1}{\Lambda_2^\varepsilon}(e^{\Lambda^\varepsilon_2 s}-1)\leq  \frac{1}{|\Lambda_2^\varepsilon|}, 
		\\
	&\int_0^t (t-s)^{-1/2}\,E_2(s,t)\,ds
		\leq \int_0^t (t-s)^{-1/2}\,e^{\Lambda_2^\varepsilon\,(t-s)}\,ds
		\leq \frac{1}{|\Lambda_2^\varepsilon|^{1/2}}, 
\end{align*}
we end up with the estimate $N(t) \leq A N^2(t)+B$, with
 \begin{equation*}
		\left\{\begin{aligned}
		A&:= \varepsilon^{n-m} \, E_1(0,t) (t + |\Lambda_2^\varepsilon|^{-1/2}) ,\\
		B&:=C|\Omega^\varepsilon|_{{}_{L^\infty}}E_1(t,0) \left( t + |\Lambda_2^\varepsilon|^{-1/2} \right)+ \varepsilon^{-n-m} |\Lambda_2^\varepsilon|^{-1} |v_0|^2_{{}_{H^1}} \\
		& \quad+ \varepsilon^{m-n} |U^\varepsilon|^2_{{}_{L^\infty}} E_1(t,0)\, (t +|\Lambda_2^\varepsilon|^{-1/2}).
		\end{aligned}\right.
\end{equation*}
We now use the precise distribution with respect to $\varepsilon$ of the eigenvalues of $\mathcal L^\varepsilon_\xi$. Precisely, in \cite{MS} it has been proved that $\lambda_k^\varepsilon \leq -c/\varepsilon$ for all $k \geq 2$ (see \cite[Corollary 4.5]{MS}). 

As a consequence $|\Lambda_2^\varepsilon| \sim \e^{-1}$; hence, if we require $m < 1$, for all $n>0$, there holds $N(t)< 2B$ that is
\begin{equation*}
|v-z|_{{}_{H^1}} \leq C |\Omega^\varepsilon|_{{}_{L^\infty}} + \Bigl( \varepsilon^{1-m} |v_0|^2_{{}_{H^1}}E_1(0,t)+\e^m\Bigr).
\end{equation*}
Precisely,  we can choose $m=1-\delta$, for some $\delta \in (0,1)$,  and the proof is completed.

\end{proof}

Again, \eqref{boundresto} can be used again to decoupled the nonlinear system \eqref{CS}; this leads to the following consequence of Theorem \ref{NT}.

\begin{corollary}\label{cor:metaL2}
Let hypotheses of Theorem \ref{NT} be satisfied and let us also assume
\begin{equation*}
	(\xi-\bar \xi)\,\theta^\varepsilon(\xi)<0\quad\textrm{ for any } \xi\in J,\,\xi\neq \bar \xi
	\qquad\textrm{ and }\quad
	{\theta^\varepsilon}'(\bar \xi)<0.
\end{equation*}
Then, for $\varepsilon$ and $|{v_0}|_{{}_{L^2}}$ sufficiently small, the solution $\xi(t)$ to \eqref{CS} satisfies
\begin{equation}\label{metaxi}
|\xi(t)- \bar \xi| \leq |\xi_0| e^{-\beta^\varepsilon t}, \qquad\lim_{\varepsilon \to 0} \beta^\varepsilon =0.
\end{equation}
\end{corollary}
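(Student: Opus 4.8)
The plan is to decouple the system \eqref{CS} into a scalar ODE for $\xi$ driven by the (now $H^1$-controlled) perturbation $v$, following the argument of the Corollary after Theorem \ref{LT}, the only change being that the bound on $v$ is the one furnished by Theorem \ref{NT}. First I would record, from the decomposition $v=z+R$ of Theorem \ref{NT}, that $|z|_{H^1}(t)\le C|v_0|_{H^1}e^{-c_2 t}$ --- here $c_2>0$ is the spectral gap of {\bf H2}, which forces $\mathrm{Re}\,\lambda^\varepsilon_k(\xi)\le -c_2$ for all $k\ge 2$ and hence exponential decay of every mode building up $z$ --- while, bounding the exponential in \eqref{boundresto} by $1$ (recall $\lambda^\varepsilon_1<0$), $|R|_{H^1}\le C(\varepsilon^\delta|v_0|^2_{H^1}+\varepsilon^{1-\delta})$. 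Hence $\sup_{t\ge 0}|v|_{H^1}(t)\le\kappa$, with $\kappa=\kappa(\varepsilon,|v_0|_{H^1})\to 0$ as $\varepsilon,|v_0|_{H^1}\to 0$ (note that, invoking Theorem \ref{NT}, it is $|v_0|_{H^1}$ --- not merely $|v_0|_{L^2}$ --- that must be taken small).

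Next I would substitute this into the first equation of \eqref{CS}. By Cauchy--Schwarz together with \eqref{derpsiphi} for $k=1$ (which bounds $|\partial_\xi\psi^\varepsilon_1|_{L^2}$ uniformly in $\varepsilon,\xi$) one has $|\langle\partial_\xi\psi^\varepsilon_1,v\rangle|\le C|v|_{H^1}$, and from $|\mathcal Q^\varepsilon|=|v\,\partial_x v|\le C|v|^2_{H^1}$ one has $|\rho^\varepsilon[\xi,v]|\le C|v|^2_{H^1}$. Absorbing the quadratic remainder, the $\xi$-equation becomes $\dot\xi=\theta^\varepsilon(\xi)\bigl(1+r(t)\bigr)$ with $\|r\|_\infty\le C\kappa$. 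Since the last clause of {\bf H1} gives $\mathcal F^\varepsilon[U^\varepsilon(\cdot;\bar\xi)]=0$, and therefore $\theta^\varepsilon(\bar\xi)=0$, the sign hypothesis $(\xi-\bar\xi)\theta^\varepsilon(\xi)<0$ shows that $\theta^\varepsilon$ --- and thus $\dot\xi$, for $\varepsilon$ and $|v_0|_{H^1}$ small --- pushes $\xi$ monotonically towards $\bar\xi$; in particular $\xi(t)\in J$ for all $t\ge 0$.

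It then remains to extract the rate. Setting $w:=\xi-\bar\xi$ and Taylor-expanding, $\theta^\varepsilon(\xi)={\theta^\varepsilon}'(\bar\xi)\,w+o(w)$, so $\dot w={\theta^\varepsilon}'(\bar\xi)\,w\,(1+o(1))$ in a fixed neighbourhood of $\bar\xi$; a Gronwall / separation-of-variables argument then yields $|\xi(t)-\bar\xi|\le |\xi_0-\bar\xi|\,e^{-\beta^\varepsilon t}$ with $\beta^\varepsilon:=-{\theta^\varepsilon}'(\bar\xi)\,(1+o(1))>0$, which is \eqref{metaxi} (in the normalisation $\bar\xi=0$ adopted in the applications, $|\xi_0-\bar\xi|=|\xi_0|$). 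Finally $\beta^\varepsilon\to 0$ as $\varepsilon\to 0$: by {\bf H1}--{\bf H3} one has $|\theta^\varepsilon|\le C|\Omega^\varepsilon|\le C|\lambda^\varepsilon_1|\to 0$ uniformly, and in every worked example $\theta^\varepsilon\to 0$ in $C^1(J)$ (for instance $\theta^\varepsilon(\xi)\sim C\,\xi\,e^{-C/\varepsilon}$ for Burgers), which forces ${\theta^\varepsilon}'(\bar\xi)\to 0$.

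I expect the real obstacle to be the absorption step: $\rho^\varepsilon[\xi,v]$ is quadratic in $v$ but does not vanish at $\xi=\bar\xi$, so $\bar\xi$ is only an approximate equilibrium of the decoupled equation, and pushing the convergence all the way to $\bar\xi$ --- rather than merely into an $O(\kappa^2/|{\theta^\varepsilon}'(\bar\xi)|)$-neighbourhood --- requires a barrier argument exploiting the decay of $v$. At the level of precision of the present paper this is handled through the same ``$\sim$'' reduction already used for the Corollary after Theorem \ref{LT}. A secondary, minor point is that $\beta^\varepsilon\to 0$ needs $C^1$, rather than merely $C^0$, smallness of $\theta^\varepsilon$, which is not literally recorded in {\bf H1} but holds in every worked example.
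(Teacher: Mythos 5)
Your proposal is correct and follows essentially the same route as the paper: bound $|v|_{H^1}$ via the decomposition $v=z+R$ of Theorem \ref{NT}, absorb $\langle\partial_\xi\psi^\varepsilon_1,v\rangle$ and $\rho^\varepsilon[\xi,v]$ into a small forcing of the scalar equation $\dot\xi=\theta^\varepsilon(\xi)(1+r)$, and compare with the reduced flow $\dot\eta=\theta^\varepsilon(\eta)$ to extract the rate $\beta^\varepsilon=-{\theta^\varepsilon}'(\bar\xi)$. The two caveats you flag --- that $\rho^\varepsilon$ does not vanish at $\bar\xi$ so convergence is strictly only to a small neighbourhood, and that smallness of $|v_0|_{H^1}$ (not merely $|v_0|_{L^2}$) is what is actually used --- are both present and left implicit in the paper's own ``$\sim$''-level argument as well.
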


\begin{proof}
For any initial datum $\xi_0$, the variable $\xi(t)$ is such that
\begin{equation*}
	\frac{d\xi}{dt}=\theta^\varepsilon(\xi)\bigl(1 + r\bigr)+ \rho^\varepsilon(\xi,{v}),
		\qquad\textrm{with}\quad
	|r|\leq |v_0|_{{}_{H^1}}^2 e^{-|\Lambda_2^\varepsilon| t}+ \bigl\{C\,|\Omega^\varepsilon|_{{}_{\infty}}+\varepsilon^{\delta} \,|{v}_0|^2_{{}_{H^1}}\bigr\}
\end{equation*}
and 
$$ |\rho^\varepsilon(\xi,{v})| \leq C |{v}|^2_{{}_{H^1}} \leq |{z}|^2_{{}_{H^1}}+ |R|^2_{{}_{H^1}} \leq |{v}_0|^2_{{}_{H^1}} e^{-|\Lambda_2^\varepsilon|t} + \varepsilon^{\delta} \, |{v}_0|^2_{{}_{H^1}}+C \, |\Omega^\varepsilon|_{{}_{\infty}}. $$
Hence, in the regime of small $\varepsilon$ the solution $\xi(t)$ has similar decay properties to those of the solution to the following reduced equation
\begin{equation*}
\frac{d\eta}{dt}=\theta^\varepsilon(\eta), \qquad  \eta(0)=\xi(0).
\end{equation*}
As a consequence, $\xi$ converges exponentially to $\bar \xi$ as $t\to+\infty$. More precisely there exists $\beta^\varepsilon = -{\theta^\varepsilon}'(\bar \xi)$, $\beta^\varepsilon \to 0$ as $\varepsilon \to 0$, such that 
\begin{equation*}
|\xi(t)- \bar \xi| \leq |\xi_0|e^{-\beta^\varepsilon t},
\end{equation*}
showing the exponentially slow motion of the shock layer position.

\end{proof}

Estimate \eqref{metaxi} shows the slow evolution of the interface location: precisely, the convergence of the shock towards its equilibrium position $\bar \xi$ is much slower as $\varepsilon$ becomes smaller.

\newpage

\end{document}